\newcommand{\norm}[1]{\|#1\|}
\newcommand{\R}{\ensuremath{\mathbb{R}}}
\newcommand{\Prob}{\ensuremath{\mathbb{P}}}
\newcommand{\EE}{\mathbb{E}}
\newcommand{\E}{\mathbb{E}}
\def\<#1,#2>{\left\langle #1,#2 \right\rangle}
\def\batchbound{M}
\newcommand{\mathbbm}[1]{\text{\usefont{U}{bbm}{m}{n}#1}}
\newcommand{\indiacc}[1]{\mathbbm{1}_{\{#1\}}}
\newcommand{\tvnorm}[1]{\| #1 \|_{\operatorname{TV}}}
\newcommand{\PEcoupling}[2]{\tilde{\mathbb{E}}_{#1,#2}}
\newcommand{\PPcoupling}[2]{\tilde{\mathbb{P}}_{#1,#2}}
\newcommand{\PP}{\mathbb{P}}
\def\metricz{\mathsf{d}_{\Zset}}
\def\rset{\mathbb{R}}
\def\cmax{\sigma}
\def\dmax{\delta}
\def\nset{\ensuremath{\mathbb{N}}}
\def\Zset{\mathsf{Z}}
\def\Zsigma{\mathcal{Z}}
\def\MKQ{\mathrm{Q}}
\def\taumix{\tau}
\newcommand{\dobrush}{\mathsf{\Delta}}
\def\nbiter{N}
\newtheorem{assumption}{Assumption}
\begin{document}
\title{Accelerated Stochastic Gradient Method with Applications to Consensus Problem in Markov-Varying Networks \thanks{The research was supported by Russian Science Foundation (project No. 23-11-00229).}}
\titlerunning{Accelerated SGD for Consensus in Markov-Varying Networks}
%
\author{Vladimir Solodkin\inst{1,2} \and
Savelii Chezhegov\inst{1,2} \and
Ruslan Nazikov\inst{1,2} \and
Aleksandr Beznosikov\inst{1,2,3} \and 
Alexander Gasnikov\inst{3,1,2}}
\authorrunning{V. Solodkin
S. Chezhegov
R. Nazikov
A. Beznosikov
A. Gasnikov}
%
\institute{Moscow Institute of Physics and Technology, Moscow, Russian Federation \and
Institute for Information Transmission Problems RAS, Moscow, Russian Federation
 \and
Innopolis University, Innopolis, Russian Federation}
\maketitle              
\begin{abstract}
Stochastic optimization is a vital field in the realm of mathematical optimization, finding applications in diverse areas ranging from operations research to machine learning. In this paper, we introduce a novel first-order optimization algorithm designed for scenarios where Markovian noise is present, incorporating Nesterov acceleration for enhanced efficiency. The convergence analysis is performed using an assumption on noise depending on the distance to the solution. We also delve into the consensus problem over Markov-varying networks, exploring how this algorithm can be applied to achieve agreement among multiple agents with differing objectives during changes in the communication system. To show the performance of our method on the problem above, we conduct experiments to demonstrate the superiority over the classic approach.
\keywords{convex optimization  \and stochastic optimization \and Markovian noise \and accelerated methods \and decentralized communications}
\end{abstract}

\section{Introduction}
Stochastic optimization encompasses a suite of methodologies aimed at minimizing or maximizing an objective function in the presence of randomness. These methods have evolved into indispensable tools across a spectrum of disciplines including science, engineering, business, computer science, and statistics. Applications are diverse, ranging from refining the placement of acoustic sensors on a beam through simulations, to determining optimal release times for reservoir water to maximize hydroelectric power generation, to fine-tuning the parameters of statistical models based on given datasets. The introduction of randomness typically occurs through the cost function or the constraint set. While the term "stochastic optimization" may encompass any optimization approach that incorporates randomness within certain communities, our focus here is on scenarios where the objective function is stochastic.\\
As with deterministic optimization, no universal solution method generally excels across all problems. Structural assumptions play a pivotal role in making problems tractable. Given that solution methodologies are intricately linked to problem structures, our analysis relies heavily on problem type, with a detailed exposition of associated solution approaches.\\\\
\noindent \textbf{Related work.}
A considerable body of research has documented substantial advancements achieved by accelerating gradient descent in a Nesterov manner \cite{nesterov_accelerated}. Building upon this foundation, Nesterov-accelerated stochastic gradient descent \cite{hu2009accelerated,lan2012optimal} emerged as a powerful tool for optimizing different objectives in stochastic settings. In the earlier works \cite{lan2008,devolder}, the proof of convergence was done using an assumption on bounded variance, which significantly narrows the application perspective. Later, \cite{schmidt2013fast} succeeded in relaxing this assumption to strong growth condition, which partially solved the aforementioned problem. At the same time, several papers delved into applying acceleration to specific stochastic cases, e.g., coordinate descent \cite{doi:10.1137/100802001}, heavy tailed noise \cite{wang2021convergence}, distributed learning \cite{Qu_2020}. However, all of these works investigate i.i.d. noise setup, while a more general case could be considered. \\
As of late, there has been an emergence of scholarly works aimed at addressing the existing gap in the analysis of Markovian noise configuration. Nonetheless, it is noteworthy that this domain continues to be a dynamically evolving field of study.
Specifically, \cite{duchi2012ergodic} examined a variant of the Ergodic Mirror Descent algorithm yielding optimal convergence rates for smooth and nonconvex problems. More recently, \cite{dorfman2022adapting} proposed a random batch size algorithm tailored for nonconvex optimization within a compact domain. 
In the Markovian noise domain, the finite-time analysis of non-accelerated SGD-type algorithms has been investigated in \cite{sun2018markov} and \cite{doan23}. However, \cite{sun2018markov} relies heavily on the assumption of a bounded domain and uniformly bounded stochastic gradient oracles and \cite{doan23} achieves only suboptimal dependence on initial conditions for strongly convex problems when employing SGD. In the exploration of accelerated SGD in the presence of Markovian noise, \cite{doan2020convergence} achieved an optimal rate of initial condition forgetting, but suboptimal variance terms. Recently, \cite{beznosikov2023first} proposed the accelerated version of SGD achieving linear dependence on the mixing time.\\
The aforementioned studies predominantly address general Markovian noise optimization. Recently, a surge of papers has emerged, focusing on the specialized scenario of distributed optimization \cite {pmlr-v162-sun22b,even2023stochastic}. \cite{wang2022stability} investigates the generalization and stability of Markov SGD with specific emphasis on excess variance guarantees. Simultaneously, specific results such as those from \cite{nagaraj2020least} offer lower bounds for particular finite-sum problems within the Markovian setting.\\\\
\noindent \textbf{Our contributions.} We present the analysis of an accelerated version of SGD in the Markovian noise setting under the assumption of a gradient estimator bounded by the distance to the optimum. We obtain sharp convergence rate and optimal dependence in terms of the mixing time of the underlying Markov chain. Moreover, for $k = 1$ Markovian scheme reduces to a classical $i.i.d.$ noise setup. To the best of our knowledge, analysis in this case (even for $i.i.d.$ stochasticity) under suggested assumptions has not been presented in the literature before. To show the practicality of our method, we perform numerical experiments on the consensus search problem on time-varying networks and show a better convergence rate compared to classical approaches for solving this problem.

\subsection{Technical Preliminaries}
Let $(\Zset,\metricz)$ be a complete separable metric space endowed with its Borel $\sigma$-field $\Zsigma$. Let $(\Zset^{\nset},\Zsigma^{\otimes \nset})$ be the corresponding canonical process. Consider the Markov kernel $\MKQ$ defined on $\Zset \times \Zsigma$, and denote by $\PP_{\xi}$ and $\E_{\xi}$ the corresponding probability distribution and the expected value with initial distribution $\xi$. Without loss of generality, we assume that $(Z_k)_{k \in \nset}$ is the corresponding canonical process. By construction, for any $A \in \Zsigma$, it holds that $\PP_{\xi}(Z_k \in A | Z_{k-1}) = \MKQ(Z_{k-1},A)$, $\PP_{\xi}$-a.s. If $\xi = \delta_{z}$, $z \in \Zset$, we write $\PP_{z}$ and $\E_{z}$ instead of $\PP_{\delta_{z}}$ and $\E_{\delta_{z}}$, respectively. For $x^{1},\ldots,x^{k}$ being the iterates of any stochastic first-order method, we denote $\mathcal{F}_{k} = \sigma(x^{j}, j \leq k)$ and write $\E_{k}$ as an alias for $\E[\cdot | \mathcal{F}_{k}]$.
\begin{lemma}[\textnormal{\textit{Cauchy Schwartz inequality}}]
For any $a,b,x_1, \ldots, x_n \in \R^d$ and $c > 0$ the following inequalities hold:
\begin{equation}
  \label{eq:inner_prod}
  2\langle a,b \rangle  \leq \frac{\norm{a}^2}{c} + c
  \norm{b}^2,
\end{equation}
\begin{equation}
  \label{eq:inner_prod_and_sqr}
  \norm{a+b}^2 \leq \left(1 + \frac{1}{c}\right)\norm{a}^2 + (1+c)
  \norm{b}^2.
\end{equation}
\end{lemma}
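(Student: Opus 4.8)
The plan is to derive both inequalities from the non-negativity of a suitably scaled squared norm, i.e.\ a parametrized version of Young's inequality, using only bilinearity of the inner product. For \eqref{eq:inner_prod} I would start from the trivial bound $0 \le \norm{a/\sqrt{c} - \sqrt{c}\,b}^2$, which makes sense because $c>0$. Expanding the right-hand side gives $0 \le \norm{a}^2/c - 2\langle a,b\rangle + c\norm{b}^2$, and rearranging yields exactly \eqref{eq:inner_prod}. (Equivalently, one can apply \eqref{eq:inner_prod} with $a,b$ replaced by $-a,b$ to also cover the lower bound $-2\langle a,b\rangle \le \norm{a}^2/c + c\norm{b}^2$, though only the stated direction is needed below.)

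For \eqref{eq:inner_prod_and_sqr} I would expand the squared norm of the sum, $\norm{a+b}^2 = \norm{a}^2 + 2\langle a,b\rangle + \norm{b}^2$, and then bound the cross term $2\langle a,b\rangle$ by \eqref{eq:inner_prod} with the same constant $c$. This gives $\norm{a+b}^2 \le \norm{a}^2 + \norm{a}^2/c + c\norm{b}^2 + \norm{b}^2$, and collecting terms produces the claimed $(1+1/c)\norm{a}^2 + (1+c)\norm{b}^2$.

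The vectors $x_1,\dots,x_n$ mentioned in the hypothesis play no role in \eqref{eq:inner_prod}–\eqref{eq:inner_prod_and_sqr}; they appear to be a placeholder for the related elementary bound $\norm{\sum_{i=1}^n x_i}^2 \le n\sum_{i=1}^n \norm{x_i}^2$, which follows from convexity of $\norm{\cdot}^2$ (Jensen's inequality) and which I would record as well if it is used later. There is essentially no obstacle here: the entire argument is two lines of algebra, and the only point requiring a moment's care is the standing assumption $c>0$, which is needed both for the division by $c$ and for the scaled vector $a/\sqrt{c}-\sqrt{c}\,b$ to be well defined.
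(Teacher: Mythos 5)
Your proof is correct and is exactly the standard argument (nonnegativity of $\norm{a/\sqrt{c}-\sqrt{c}\,b}^2$ for \eqref{eq:inner_prod}, then expansion of $\norm{a+b}^2$ plus \eqref{eq:inner_prod} for \eqref{eq:inner_prod_and_sqr}); the paper states this lemma without proof precisely because this two-line derivation is the intended one. Your remark that $x_1,\dots,x_n$ are unused in the stated inequalities is also accurate.
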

\section{Problem and Assumptions}
\label{sec:PaA}

In this paper, we study the minimization problem
\begin{equation}
\label{eq:erm}
\min_{x \in \R^d} \Bigg[f(x) := \E_{Z \sim \pi} [F(x, Z)]\Bigg]
\,,
\end{equation}
    where access to the function $f$ and its gradient are available only through the noisy oracle $F(x, Z)$ and $\nabla F(x, Z)$ respectively. We start by presenting two classical regularity constraints on the target function \emph{f}:
\begin{assumption}
\label{as:lipsh_grad}
The function $f$ is $L$-smooth on $\R^d$ with $L > 0$, i.e. it is continuously differentiable and there exists a constant $L > 0$ such that the following inequality holds for all $x,y \in \R^d$:
\begin{equation*}
    \label{L-smooth1}
    \norm{\nabla f(x) - \nabla f(y) } \leq L \norm{x-y }.
\end{equation*}
\end{assumption}

\begin{assumption}
\label{as:strong_conv}
The function $f$ is $\mu$-strongly convex on $\R^d$, i.e. it is continuously differentiable, and there exists a constant $\mu > 0$ such that the following inequality holds for all $x,y \in \R^d$:
\begin{equation*}
 \frac{\mu}{2}\norm{x-y}^2 \leq f(x) - f(y) - \langle \nabla f(y), x-y \rangle\,.
\end{equation*}
\end{assumption}
Next we specialize our assumption on the sequence of noise variables $\{Z_i\}_{i=0}^{\infty}$. Assumption \ref{as:Markov_noise_UGE} is also considered to be classical in the case of stochastic optimization with the Markovian noise \cite{dorfman2022adapting,sun2018markov,doan2020convergence}. It allows us to deal with finite state-space Markov chains with irreducible and aperiodic transition matrix. 

\begin{assumption}
\label{as:Markov_noise_UGE}
$\{Z_i\}_{i=0}^{\infty}$ is a stationary Markov chain on $(\Zset,\Zsigma)$ with Markov kernel $\MKQ$ and unique invariant distribution $\pi$. Moreover, $\MKQ$ is uniformly geometrically ergodic with mixing time $\taumix \in \nset$, i.e., for every $k \in \nset$,
\begin{equation*}
 \dobrush(\MKQ^k) = \sup_{z,z' \in \Zset} (1/2) \norm{\MKQ^k(z, \cdot) - \MKQ^k(z',\cdot)}_{\mathsf{TV}} \leq (1/4)^{\lfloor k / \taumix \rfloor}\,.
\end{equation*}
\end{assumption}
Now we specify our assumption on the stochastic gradient estimator. The majority of existing literature on stochastic first order methods for solving \eqref{eq:erm} utilizes \textit{strong  growth condition} \cite{schmidt2013fast} or \textit{uniformly bounded variance} \cite{lan2008} as they allow to prove the convergence quite straightforwardly. However, these assumptions narrow down the set of target functions that can be considered rather strongly and there are several kinds of relaxation of it \cite{vaswani2019fast}, where gradient differences are bounded by the norm of the true gradient and a certain bias. Instead of this, we propose to use the following assumption: 
\begin{assumption}
\label{as:bounded_markov_noise_UGE}
For all $x \in \rset^{d}$ it holds that $\E_{\pi}[\nabla F(x, Z)] = \nabla f(x)$. Moreover, for all $z \in \Zset$ and $x \in \rset^{d}$ it holds that
\begin{equation}
\label{eq:growth_condition}
\norm{\nabla F(x, z) - \nabla f(x)}^{2} \leq \cmax^{2} + \dmax^{2} \| x-x^* \|^{2}\,.
\end{equation}
\end{assumption}
It is one way or another much weaker, then strong grows condition and uniformly bounded variance and, to the best of our knowledge, seem to be new for analyzing accelerated methods for solving stochastic optimization problems. One can notice that unlike the i.i.d. case, we are forced to require the almost sure bound in \eqref{eq:growth_condition} rather than in expectation. This issue inevitably arises when dealing with Markovian stochasticity due to the impossibility of using the expectation trick \cite{beznosikov2020distributed}, and has not yet been solved by any authors dealing with such type of stochasticity \cite{dorfman2022adapting,sun2018markov,doan23}. Either way, there are advantages to this approach as well. If we additionally require our noisy oracle $F(x, Z)$ to be $\Tilde{L}-$Lipschiz, then Assumption \ref{as:bounded_markov_noise_UGE} is automatically satisfied. Formally, if for any $x,y \in \R^d$, $$\norm{\nabla F(x, z) - \nabla F(y, z)} \leq \Tilde{L}(z)\norm{x - y},$$ for $\Tilde{L}:\Zset\rightarrow\rset^+$ with $\sup|\Tilde{L}|<\infty$, then 
\begin{equation*}
    \begin{split}
        \norm{\nabla F(x, z) - \nabla f(x)}^2 &\leq 3\norm{\nabla F(x, z) - \nabla F(x^*\!, z)}^2 + 3\norm{\nabla F(x^*\!, z) - \nabla f(x^*)}^2
        \\&
        + 3\norm{\nabla f(x) - \nabla f(x^*)}^2
        \\&\leq3(\norm{\Tilde{L}}^2+L^2)\norm{x-x^*}^2 + 3\norm{\nabla F(x^*\!, z) - \nabla f(x^*)}^2,
    \end{split}
\end{equation*}
taking $\sigma = \sqrt{3\norm{\nabla F(x^*\!, z) - \nabla f(x^*)}}$ and $\delta = \sqrt{6\max(L, \norm{\Tilde{L}})}$ gives Assumption \ref{as:bounded_markov_noise_UGE}.
\newpage
\section{Main results}
We start by introducing our version of Nesterov accelerated SGD.
It utilizes the idea from \cite{beznosikov2023first} of using exactly the number of samples that comes from the truncated geometric distribution with truncation parameter to be specified later (see Theorem \ref{th:acc}) in order to obtain optimal computational complexity of the algorithm.
{\tiny \begin{algorithm}[h!]
   \caption{\texttt{Markov Accelerated GD}}
   \label{alg:AGD_ASGD}
\begin{algorithmic}[1]
\State {\bf Parameters:} stepsize $\gamma>0$, momentums $\theta, \eta$, number of iterations $\nbiter$, batchsize limit $\batchbound$
\State {\bf Initialization:} choose  $x^0  = x^0_f$
\For{$k = 0, 1, 2, \dots, \nbiter-1$}
    \State $x^k_g = \theta x^k_f + (1 - \theta) x^k$ \label{line_acc_1}
    \State Sample $\textstyle{J_k \sim \text{Geom}\left(1/2\right)}$
    \State
    \text{\small{ 
    $g^{k} = g^{k}_0 +
        \begin{cases}
        \textstyle{2^{J_k}\hspace{-0.1cm}\left( g^{k}_{J_k}  - g^{k}_{J_k - 1} \right)}, &\hspace{-0.25cm} \text{if } 2^{J_k} \leq \batchbound \\
        0, & \hspace{-0.25cm}\text{otherwise}
        \end{cases}
    $with
    $
    \textstyle{g^k_j = \frac{1}{2^j} \sum\nolimits_{i=1}^{2^j} \nabla F
    (x^{k}_g, Z_{T^{k} + i})}
    $
    }} \label{line_acc_g}
    \State    $\textstyle{x^{k+1}_f = x^k_g - \gamma g^k}$ \label{line_acc_2}
    \State    $\textstyle{x^{k+1} = \eta x^{k+1}_f + (1 - \eta)x^k_f}$ \label{line_acc_3}
    \State    $\textstyle{T^{k+1} = T^{k} + 2^{J_{k}}}$\label{line_counter}
\EndFor
\end{algorithmic}
\end{algorithm}}
\\The key idea behind randomized batch size is to reduce the bias of the stochastic gradient estimator. Motivation for this is irrefutably natural as under the Markovian stochastic gradients oracles this bias appears by itself. Indeed, one can easily show the fact that:
\[
\E_{k}[\nabla F(x^{k},Z_{T^{k}+i})] \neq \nabla f(x^{k})\,.
\]
In a subsequent part, we show how the bias of the gradient estimator introduced in line \ref{line_acc_g} of Algorithm \ref{alg:AGD_ASGD} scales with the truncation parameter $M$. To obtain proper dependence, we first need to introduce auxiliary Lemma \ref{lem:tech_markov}, which is to constrain the gradient estimator with a simpler structure. In particular, we bound MSE for sample average approximation computed over batch size $n$ under arbitrary initial distribution. We emphasise that it is extremely essential to have the bound for MSE under arbitrary initial distribution $\xi$, because in the proof of our Theorem \ref{th:acc} we will unavoidably manage the conditional expectations w.r.t. the previous iterate.
\begin{lemma}\label{lem:tech_markov}
Consider Assumptions \ref{as:Markov_noise_UGE} and \ref{as:bounded_markov_noise_UGE}. Then, for any $n \geq 1$ and $x \in \rset^{d}$, it holds that
\begin{equation}
\label{eq:var_bound_stationary_app}
{
\E_{\pi}\Big[\norm{\frac{1}{n}\sum\nolimits_{i=1}^{n}\nabla F(x, Z_{i}) - \nabla f(x)}^2\Big]
    \leq
    \frac{8 \taumix}{n} \left( \cmax^2 + \dmax^2 \| x - x^* \|^2 \right)
}\,.
\end{equation}
Moreover, for any initial distribution $\xi$ on $(\Zset,\Zsigma)$, that
\begin{equation}
\label{eq:var_bound_any_app}
{
\E_{\xi}\Big[\norm{\frac{1}{n}\sum\nolimits_{i=1}^{n}\nabla F(x, Z_i) - \nabla f(x)}^2\Big] \leq \frac{C_{1} \taumix}{n} \left( \cmax^2 + \dmax^2 \| x-x^* \|^2 \right)
}\,,
\end{equation}
where $C_{1} = 16(1 + \frac{1}{\ln^{2}{4}})$.
\end{lemma}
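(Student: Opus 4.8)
My plan is to reduce both inequalities to a single one-step mixing estimate and then sum a geometric series. I would fix $x\in\rset^{d}$, abbreviate $g_i:=\nabla F(x,Z_i)-\nabla f(x)$ and $D:=\cmax^{2}+\dmax^{2}\norm{x-x^{*}}^{2}$, so that the target is $\E[\norm{\tfrac1n\sum_{i=1}^{n}g_i}^{2}]$ and, by Assumption~\ref{as:bounded_markov_noise_UGE}, $\norm{g_i}^{2}\le D$ almost surely while $\nabla f(x)=\E_{\pi}[\nabla F(x,Z)]$. The first step is the key lemma: for any lag $k\ge1$,
\[
\norm{\E[g_{i+k}\mid Z_i]}=\norm{\textstyle\int\nabla F(x,z')\,\MKQ^{k}(Z_i,dz')-\nabla f(x)}\le 2\sqrt D\,(1/4)^{\lfloor k/\taumix\rfloor}\quad\text{a.s.},
\]
and the same bound holds under any initial law since conditionally on $Z_i$ the chain runs under $\MKQ$. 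I would prove this by writing $\nabla f(x)=\int\nabla F(x,z')\,\pi(dz')$, recentering the integrand so that it is paired against the zero-mass signed measure $\MKQ^{k}(Z_i,\cdot)-\pi$, and bounding by $\sqrt D\,\tvnorm{\MKQ^{k}(Z_i,\cdot)-\pi}\le 2\sqrt D\,\dobrush(\MKQ^{k})\le 2\sqrt D\,(1/4)^{\lfloor k/\taumix\rfloor}$, using $\tvnorm{\MKQ^{k}(z,\cdot)-\pi}\le\int\tvnorm{\MKQ^{k}(z,\cdot)-\MKQ^{k}(z',\cdot)}\,\pi(dz')$ together with Assumption~\ref{as:Markov_noise_UGE}.

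For the stationary bound \eqref{eq:var_bound_stationary_app}, I would expand the squared norm into $n$ diagonal terms (each $\le D$) and $2\sum_{i=1}^{n}\sum_{k=1}^{n-i}\E_{\pi}[\langle g_i,g_{i+k}\rangle]$; conditioning each cross term on $Z_i$ and using $\norm{g_i}\le\sqrt D$ a.s.\ with the key lemma gives $\E_{\pi}[\langle g_i,g_{i+k}\rangle]\le 2D(1/4)^{\lfloor k/\taumix\rfloor}$, and since $\sum_{k\ge1}(1/4)^{\lfloor k/\taumix\rfloor}\le\taumix\sum_{m\ge0}4^{-m}=\tfrac43\taumix$ the whole expression is at most $\tfrac{D}{n}(1+\tfrac{16}{3}\taumix)\le\tfrac{8\taumix}{n}D$ (using $\taumix\ge1$).

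For an arbitrary initial law \eqref{eq:var_bound_any_app} the mean $m_i:=\E_{\xi}[g_i]$ no longer vanishes, so I would split $\tfrac1n\sum g_i=\tfrac1n\sum(g_i-m_i)+\tfrac1n\sum m_i$ and apply \eqref{eq:inner_prod_and_sqr} with $c=1$. The centered piece is handled just as above — $\E_{\xi}\norm{g_i-m_i}^{2}\le\E_{\xi}\norm{g_i}^{2}\le D$ and, conditioning on $Z_i$, $\E_{\xi}\langle g_i-m_i,g_j-m_j\rangle\le 2D(1/4)^{\lfloor(j-i)/\taumix\rfloor}$ for $i<j$, because $\E_{\xi}[g_j-m_j\mid Z_i]=\int\nabla F(x,z')(\MKQ^{j-i}(Z_i,\cdot)-\xi\MKQ^{j})(dz')$ is once more a recentered gradient integrated against a zero-mass measure of total variation $\le 2(1/4)^{\lfloor(j-i)/\taumix\rfloor}$ — giving $O(\taumix/n)\,D$ with the constant $8$ doubled to $16$ by \eqref{eq:inner_prod_and_sqr}. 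The deterministic piece uses $\norm{m_i}\le\sqrt D\,\tvnorm{\xi\MKQ^{i}-\pi}\le 2\sqrt D(1/4)^{\lfloor i/\taumix\rfloor}$; bounding $\sum_{i\ge1}(1/4)^{\lfloor i/\taumix\rfloor}$ through $\lfloor i/\taumix\rfloor\ge i/\taumix-1$ (which replaces the step exponent by the genuine geometric rate $4^{-i/\taumix}$ and is where the factor $\ln^{-2}4$ appears) and using $\taumix\le n$ gives $\norm{\tfrac1n\sum m_i}^{2}=O(\tfrac{\taumix}{n\ln^{2}4}D)$; adding the pieces produces the stated $C_{1}=16(1+\tfrac1{\ln^{2}4})$.

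I expect the one-step estimate to be the only real obstacle. Under Assumption~\ref{as:bounded_markov_noise_UGE} there is no bound on $\norm{\nabla F(x,z')}$ itself, only on its distance to $\nabla f(x)$, so the recentering against a zero-mass measure is essential; without it the total-variation mixing of Assumption~\ref{as:Markov_noise_UGE} cannot be brought to bear. Everything afterwards — the diagonal/cross-term expansion, the geometric summation, and the bias/variance split for a general starting distribution — is routine bookkeeping, and the larger constant $C_{1}$ in the non-stationary case is simply the cost of the bias term $\tfrac1n\sum m_i$.
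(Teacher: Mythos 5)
Your argument is essentially correct but takes a genuinely different route from the paper's, and it contains one small but real gap. Writing $D=\cmax^2+\dmax^2\norm{x-x^*}^2$: for the non-stationary bound \eqref{eq:var_bound_any_app} the paper never reconditions on $Z_i$; it invokes a maximal exact coupling of $\PP_{\xi}$ and $\PP_{\pi}$ (via \cite{douc:moulines:priouret:soulier:2018}), splits the sum into the stationary sum plus the coupled difference, bounds the latter almost surely by $D\sum_i i\,\indiacc{T>i}$ using Assumption \ref{as:bounded_markov_noise_UGE}, and then sums the coupling-time tail; the stationary estimate \eqref{eq:var_bound_stationary_app} is simply used, not reproved (it is inherited from the lemmas of \cite{beznosikov2023first}). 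You instead prove \eqref{eq:var_bound_stationary_app} from scratch by the diagonal/cross-term expansion, controlling each cross term through the recentered integral against the zero-mass signed measure $\MKQ^{k}(Z_i,\cdot)-\pi$ and the Dobrushin coefficient of Assumption \ref{as:Markov_noise_UGE}; the recentering is indeed the essential move, since only $\nabla F(x,z)-\nabla f(x)$ is bounded, and your version has the merit of making the lemma self-contained. Your bias--variance split for general $\xi$, with $\tvnorm{\MKQ^{\,j-i}(Z_i,\cdot)-\xi\MKQ^{\,j}}\le 2\dobrush(\MKQ^{\,j-i})$ for the centered cross terms (the constant $2D$ is legitimate because $\E_{\xi}\norm{g_i-m_i}\le\sqrt{D}$ by Cauchy--Schwarz) and $\norm{m_i}\le\sqrt{D}\,\tvnorm{\xi\MKQ^{\,i}-\pi}$ for the bias, is a valid alternative to the coupling argument.

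The gap is the step ``using $\taumix\le n$'' in the bias term: nothing gives you $\taumix\le n$, the lemma is asserted for every $n\ge1$, and for $n\lesssim\taumix$ your estimate $\norm{\tfrac1n\sum_i m_i}^{2}=O\big(\taumix^{2}/(n^{2}\ln^{2}4)\big)D$ is not dominated by $O\big(\taumix/(n\ln^{2}4)\big)D$. The repair is exactly the two-case argument the paper performs at the end of its own proof: if $n\le c\,\taumix$ (say $c=16$), the trivial almost-sure bound $\norm{\tfrac1n\sum_i g_i}\le\sqrt{D}$ already gives \eqref{eq:var_bound_any_app} because $C_1\taumix/n\ge1$ there; if $n\ge c\,\taumix$, your computation applies and the $\taumix^{2}/n^{2}$ term is absorbed into $\taumix/n$. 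With that split (and a mildly careful threshold) your constants also fit inside the stated $C_1=16(1+1/\ln^{2}4)$; as written your bookkeeping yields a slightly larger absolute constant, which is harmless since downstream results use $C_1$ only as an absolute constant.
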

\begin{proof}
    By \cite[Lemma~19.3.6 and Theorem~19.3.9]{douc:moulines:priouret:soulier:2018}, for any two probabilities $\xi,\xi'$ on $(\Zset,\Zsigma)$ there is a \emph{maximal exact coupling} $(\Omega,\mathcal{F},\PPcoupling{\xi}{\xi'},Z,Z',T)$ of $\PP ^{\MKQ}_{\xi}$ and $\PP ^{\MKQ}_{\xi'}$, that is,
\begin{equation}
\label{eq:coupling_time_def_markov}
\tvnorm{\xi \MKQ^n- \xi'\MKQ^n} = 2 \PPcoupling{\xi}{\xi'}(T > n)\,.
\end{equation}
We write $\PEcoupling{\xi}{\xi'}$ for the expectation with respect to $\PPcoupling{\xi}{\xi'}$. Using the coupling construction \eqref{eq:coupling_time_def_markov},
\begin{multline*}
{\E^{1/2}_{\xi}\Big[\norm{\sum_{i=1}^{n}\{\nabla F(x, Z_i) - \nabla f(x)\}}^2\Big]
\leq \E^{1/2}_{\pi}\Big[\norm{\sum_{i=0}^{n-1}\nabla F(x, Z_i) - \nabla f(x)}^{2}\Big]} ~+ \\
{\PEcoupling{\xi}{\pi}^{1/2}\Big[\norm{\sum_{i=0}^{n-1}\{\nabla F(x, Z_i) - \nabla F(x, Z^{\prime}_i)\}}^{2}\Big]}\,.
\end{multline*}
The first term is bounded with \eqref{eq:var_bound_stationary_app}. Moreover, with \eqref{eq:coupling_time_def_markov} and Assumption \ref{as:bounded_markov_noise_UGE}, we get
\begin{align*}
    \norm{\sum_{i=0}^{n-1}\{\nabla F(x, Z_i) - \nabla F(x, Z^{\prime}_i)\}}^{2} 
    &\leq
    8 \left( \cmax^2 + \dmax^2 \| x - x^* \|^2 \right) \Bigg(\sum_{i=0}^{n-1}\indiacc{Z_i \neq Z^{'}_{i}}\Bigg)^{2}
    \\
    &= 8 \left( \cmax^2 + \dmax^2 \| x - x^* \|^2 \right) \Bigg(\sum_{i=0}^{n-1}\indiacc{T > i}\Bigg)^{2} 
    \\
    &\leq
    16 \left( \cmax^2 + \dmax^2 \| x - x^* \|^2 \right) \sum_{i=1}^{\infty} i\, \indiacc{T > i}\,.
\end{align*}
Thus, using the Assumption \ref{as:Markov_noise_UGE}, we bound
$$\PEcoupling{\xi}{\pi}\Big[\sum_{i=1}^{\infty} i\, \indiacc{T > i}\Big] = \sum_{i=1}^{\infty} i \PPcoupling{\xi}{\xi'}(T > i) = \sum_{i=1}^{\infty} i (1/4)^{\lfloor i / \taumix \rfloor} \leq 4 \sum_{i=1}^{\infty} i (1/4)^{ i / \taumix }\,.$$
Now we set $\rho = (1/4)^{1/ \taumix}$ and use an upper bound
\begin{equation*}
    \begin{split}
        \sum_{k=1}^{\infty}k \rho^{k}
        \leq \rho^{-1}\int\limits_{0}^{+\infty}x^{p}\rho^{x}\,d x &\leq \rho^{-1}\left(\ln{\rho^{-1}}\right)^{-2} \Gamma(2) 
        \\&= 
        \rho^{-1}\left(\ln{\rho^{-1}}\right)^{-2} = \frac{\taumix^{2}}{(1/4)^{1/\taumix} \ln^{2}{4}}\,.
    \end{split}
\end{equation*}
Combining the bounds above yields
\[
\E_{\xi}\Big[\norm{\frac{1}{n}\sum\limits_{i=1}^{n}\nabla F(x, Z_i) - \nabla f(x)}^2\Big] \leq \Big(\frac{c_{1} \taumix}{n} + \frac{c_{2} \taumix^{2}}{n^2}\Big) \left( \cmax^2 + \dmax^2 \| x - x^* \|^2 \right)\,,
\]
where $c_{1} = 16$, $c_{2} = \frac{128 (1/4)^{-1/\taumix}}{\ln^{2}{4}}$. Now we consider the two cases. If $n < c_{1}\taumix$, we get from Minkowski's inequality that
\[
\E_{\xi}\Big[\norm{\frac{1}{n}\sum\limits_{i=1}^{n}\nabla F(x, Z_i) - \nabla f(x)}^2\Big] \leq 2\cmax^2 + 2\dmax^2 \| x - x^* \|^2\,,
\]
and \eqref{eq:var_bound_any_app} holds. If $n > c_{1}\taumix$, it holds that
\[
\frac{c_{2} \taumix^{2}}{n^2} \left( \cmax^2 + \dmax^2 \| \nabla f(x) \|^2 \right) \leq \frac{c_{2} \taumix^{2}}{n c_{1} \taumix}\left( \cmax^2 + \dmax^2 \| x - x^* \|^2 \right)\,,
\]
and we gain \eqref{eq:var_bound_any_app} too.\hfill$\square$
\end{proof}
We are now ready to bound the MSE for the gradient estimator introduced in line \ref{line_acc_g} of Algorithm \ref{alg:AGD_ASGD}. From Lemma \ref{lem:expect_bound_grad_appendix}, we obtain a desired linear dependence of the error reduction on the parameter $M$. 
\begin{lemma}
\label{lem:expect_bound_grad_appendix}
Consider Assumptions \ref{as:Markov_noise_UGE} and \ref{as:bounded_markov_noise_UGE}. Then for the gradient estimates $g^k$ from line \ref{line_acc_g} Algorithm \ref{alg:AGD_ASGD} it holds that $\E_k[g^k] = \E_k[g^{k}_{\lfloor \log_2 \batchbound \rfloor}]$. Moreover, 
\begin{align}
\label{eq:var_bounds_random_grad}
&\E_k[\| \nabla f(x^k_g) - g^k\|^2] \leq 13C_{1} \taumix \log_2 M (\cmax^2 + \dmax^2 \| x^k_g - x^*\|^2)\,, \\
&\| \nabla f(x^k_g) - \E_{k}[g^k]\|^2 \leq 2C_{1}\taumix \batchbound^{-1} (\cmax^2 + \dmax^2 \| x^k_g - x^*\|^2)\,, \nonumber 
\end{align}
where $C_1$ is defined in \eqref{eq:var_bound_any_app}.
\end{lemma}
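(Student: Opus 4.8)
The plan is to establish the three claims in turn, using Lemma~\ref{lem:tech_markov} as the workhorse for controlling each ``block'' estimator $g^k_j$ conditionally on $\mathcal{F}_k$. First I would verify the unbiasedness-type identity $\E_k[g^k] = \E_k[g^k_{\lfloor \log_2 M \rfloor}]$. Writing $j_{\max} = \lfloor \log_2 M \rfloor$, condition on $J_k$: since $J_k \sim \mathrm{Geom}(1/2)$ we have $\PP(J_k = j) = 2^{-j}$, so for each $j$ with $2^j \le M$ the term $2^{J_k}(g^k_{J_k} - g^k_{J_k-1})$ contributes $2^j \cdot 2^{-j}(g^k_j - g^k_{j-1}) = g^k_j - g^k_{j-1}$ to $\E_k[\,\cdot\,\mid J_k \le j_{\max}]$ (up to the conditioning subtlety on $\{Z_{T^k+i}\}$, which is handled because the counter $T^k$ is $\mathcal{F}_k$-measurable and the chain is time-homogeneous). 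Summing the telescoping series over $j = 1, \dots, j_{\max}$ gives $\E_k[g^k] = \E_k[g^k_0] + \E_k[g^k_{j_{\max}} - g^k_0] = \E_k[g^k_{j_{\max}}]$, using $g^k_0 = \nabla F(x^k_g, Z_{T^k+1})$.

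For the second (bias) bound, I would write $\nabla f(x^k_g) - \E_k[g^k] = \nabla f(x^k_g) - \E_k[g^k_{j_{\max}}]$ and apply the estimator identity $g^k_{j_{\max}} = \tfrac{1}{2^{j_{\max}}}\sum_{i=1}^{2^{j_{\max}}} \nabla F(x^k_g, Z_{T^k+i})$ with $n = 2^{j_{\max}}$. Since $\|\E_k[\cdot]\|^2 \le \E_k[\|\cdot\|^2]$ by Jensen, the bias is dominated by the MSE bound \eqref{eq:var_bound_any_app} of Lemma~\ref{lem:tech_markov} applied with the initial distribution $\xi$ being (the $\mathcal{F}_k$-conditional law of) $Z_{T^k}$ pushed one step; this yields $\le \tfrac{C_1 \taumix}{2^{j_{\max}}}(\cmax^2 + \dmax^2\|x^k_g - x^*\|^2)$. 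Finally $2^{j_{\max}} = 2^{\lfloor \log_2 M\rfloor} \ge M/2$, which produces the factor $2 C_1 \taumix M^{-1}$. (One point of care: the vectors $x^k_g$ and $x^*$ are $\mathcal{F}_k$-measurable, so $\|x^k_g - x^*\|^2$ passes outside $\E_k$ cleanly, and Lemma~\ref{lem:tech_markov} may be invoked with $x = x^k_g$ fixed.)

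For the first (second-moment) bound, I would start from $\nabla f(x^k_g) - g^k$ and split it as $(\nabla f(x^k_g) - g^k_0) - \big(g^k - g^k_0\big)$, so that $\E_k[\|\nabla f(x^k_g) - g^k\|^2] \le 2\,\E_k[\|\nabla f(x^k_g) - g^k_0\|^2] + 2\,\E_k[\|g^k - g^k_0\|^2]$ via \eqref{eq:inner_prod_and_sqr} with $c=1$. The first piece is \eqref{eq:var_bound_any_app} with $n=1$, giving $\le C_1\taumix(\cmax^2 + \dmax^2\|x^k_g-x^*\|^2)$. For the second, condition on $J_k$: $\E_k[\|g^k - g^k_0\|^2] = \sum_{j:\, 2^j \le M} 2^{-j}\cdot 4^j\, \E_k[\|g^k_j - g^k_{j-1}\|^2] = \sum_{j=1}^{j_{\max}} 2^j\, \E_k[\|g^k_j - g^k_{j-1}\|^2]$. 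Each difference $g^k_j - g^k_{j-1}$ is a difference of two sample averages over overlapping blocks, so $\|g^k_j - g^k_{j-1}\|^2$ can be bounded (after centering both at $\nabla f(x^k_g)$ and using \eqref{eq:inner_prod_and_sqr} twice) by a constant times $\E_k[\|g^k_j - \nabla f(x^k_g)\|^2] + \E_k[\|g^k_{j-1} - \nabla f(x^k_g)\|^2]$, each of which is at most $\tfrac{C_1 \taumix}{2^{j-1}}(\cmax^2+\dmax^2\|x^k_g-x^*\|^2)$ by Lemma~\ref{lem:tech_markov}. Hence $2^j\,\E_k[\|g^k_j-g^k_{j-1}\|^2] \lesssim C_1\taumix(\cmax^2+\dmax^2\|x^k_g-x^*\|^2)$ uniformly in $j$, and summing over the $j_{\max} = \lfloor \log_2 M\rfloor \le \log_2 M$ terms gives a total of order $C_1 \taumix \log_2 M\,(\cmax^2 + \dmax^2\|x^k_g-x^*\|^2)$; tracking the explicit constants (the $2$ from the outer split, the $2$'s and $3$'s from repeated use of \eqref{eq:inner_prod_and_sqr}) should land at the stated $13 C_1$.

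The main obstacle I anticipate is the bookkeeping in the second-moment bound: getting the per-level contribution to be $O(1)$ in $j$ (so that the sum is genuinely $O(\log_2 M)$ rather than $O(M)$) hinges on the cancellation between the $2^j$ weight from the geometric reweighting and the $1/2^j$ decay of the MSE of the $j$-th block average --- this is exactly the ``variance reduction via random truncation'' mechanism, and it must be verified that the cross terms in $\|g^k_j - g^k_{j-1}\|^2$ do not spoil it. A secondary, more technical point is justifying the conditional application of Lemma~\ref{lem:tech_markov}: since $T^k = T^0 + \sum_{\ell<k} 2^{J_\ell}$ is random but $\mathcal{F}_k$-measurable, one must argue that conditionally on $\mathcal{F}_k$ the shifted chain $(Z_{T^k + i})_{i\ge 1}$ is a Markov chain with kernel $\MKQ$ started from some (possibly worst-case) initial law, which is precisely why the ``arbitrary initial distribution'' version \eqref{eq:var_bound_any_app} was proved.
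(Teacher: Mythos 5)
Your proposal is correct and follows essentially the same route as the paper: telescoping over $J_k$ for the identity, Jensen plus Lemma~\ref{lem:tech_markov} with $2^{\lfloor\log_2 M\rfloor}\ge M/2$ for the bias, and the split at $g^k_0$ with the $2^j$ reweighting cancelling the $1/2^j$ MSE decay for the second-moment bound. The only cosmetic difference is that the paper bounds the $g^k_0$ term directly via the almost-sure bound of Assumption~\ref{as:bounded_markov_noise_UGE} (giving $2\cmax^2+2\dmax^2\|x^k_g-x^*\|^2$) rather than via \eqref{eq:var_bound_any_app} with $n=1$, which is how the constant $13C_1$ comes out.
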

\begin{proof}
To show that $\E_k[g^k] = \E_k[g^{k}_{\lfloor \log_2 \batchbound \rfloor}]$ we simply compute conditional expectation w.r.t. $J_k$:
\begin{equation}
\label{eq:tech:lem3}
    \begin{split}
        \E_k[g^k] &= \E_k\left[\E_{J_k}[g^k]\right] =
        \E_k[g^k_0] + \sum\limits_{i=1}^{\lfloor \log_2 \batchbound \rfloor} \Prob\{J_k = i\} \cdot 2^i \E_k[g^{k}_{i}  - g^{k}_{i-1}] \\
        &= \E_k[g^k_0] + \sum\limits_{i=1}^{\lfloor \log_2 \batchbound \rfloor} \E_k[g^{k}_{i}  - g^{k}_{i-1}] = \E_k[g^{k}_{\lfloor \log_2 \batchbound \rfloor}]\,.
    \end{split}
\end{equation}
We start with the proof of the first statement of \eqref{eq:var_bounds_random_grad} by taking the conditional expectation for $J_k$:
\begin{align*}
    &\E_{k}[\| \nabla f(x^k_g) - g^k\|^2]
    \leq
    2\E_{k}[\| \nabla f(x^k_g) - g^k_0\|^2] + 2\E_{k}[\| g^k - g^k_0\|^2]
    \\
    &\quad=
    2\E_{k}[\| \nabla f(x^k_g) - g^k_0\|^2] + 2 \sum\nolimits_{i=1}^{\lfloor \log_2 \batchbound \rfloor} \Prob\{J_k = i\} \cdot 4^i \E_k[\|g^{k}_{i}  - g^{k}_{i-1}\|^2] \\
    &\quad=
    2\E_{k}[\| \nabla f(x^k_g) - g^k_0\|^2] + 2\sum\nolimits_{i=1}^{\lfloor \log_2 \batchbound \rfloor} 2^i \E_k[\|g^{k}_{i}  - g^{k}_{i-1}\|^2] \\
    &\quad\leq
    2\E_{k}[\| \nabla f(x^k_g) - g^k_0\|^2]\,\, + \\
    &\quad\quad+4\sum\nolimits_{i=1}^{\lfloor \log_2 \batchbound \rfloor} 2^i \left(\E_k[\|\nabla f(x^k_g)  - g^{k}_{i-1}\|^2 + \E_k[\|g^{k}_{i}  - \nabla f(x^k_g)\|^2] \right)\,.
\end{align*}
To bound $\E_{k}[\| \nabla f(x^k_g) - g^k_0\|^2]$, $\E_{k}[\|\nabla f(x^k_g)  - g^{k}_{i-1}\|^2$, $\E_{k}[\|g^{k}_{i}  - \nabla f(x^k_g)\|^2]$, we apply Lemma \ref{lem:tech_markov} and get
\begin{align*}
    \E_{k}[\| &\nabla f(x^k_g) - g^k\|^2]
     \\&\leq2 \cmax^{2} + 2\dmax^2 \| x_g^k - x^*\|^2 + 12\sum\nolimits_{i=1}^{\lfloor \log_2 \batchbound \rfloor} 2^i \cdot \frac{C_{1} \taumix}{2^{i}} (\cmax^2 + \dmax^2 \| x_g^k - x^*\|^2) \\
    &\leq
    13C_{1} \taumix \log_2 M(\cmax^2 + \dmax^2 \| x_g^k - x^*\|^2)\,.
\end{align*}
To show the second part of the statement, we use \eqref{eq:tech:lem3} and get
\[
\| \nabla f(x^k_g) - \E_{k}[g^k]\|^2 = \| \nabla f(x^k) - \E_k[g^{k}_{\lfloor \log_2 \batchbound \rfloor}]\|^2\,.
\]
Using Lemma \ref{lem:tech_markov} and $2^{\lfloor \log_{2}\batchbound \rfloor} \geq \batchbound/2$ finishes the proof.\hfill$\square$
\end{proof}
We also note that our proofs of Lemma \ref{lem:tech_markov} and Lemma \ref{lem:expect_bound_grad_appendix} rely on the proofs of Lemmas 1 and 2 of \cite{beznosikov2023first}, but for the sake of clarity of the narrative we give them in full.\\
Now, before we move on to the proof of our major result, we first need to introduce two descent lemmas: 
\begin{lemma}
\label{lem:tech_lemma_acc_gd}
Consider Assumptions \ref{as:lipsh_grad} and \ref{as:strong_conv} be satisfied. Then for the iterates of Algorithm \ref{alg:AGD_ASGD} with $\theta = (1 - \eta) / (\beta - \eta)$, $\theta > 0$, $\eta \geq 1$, it holds that
\begin{align}
\label{eq:acc_temp4}
    \E_k[\|x^{k+1} - x^*\|^2]
     \leq&
    (1 + \alpha \gamma \eta)( 1 - \beta) \| x^k - x^*\|^2 + (1 + \alpha \gamma \eta) \beta\|x^k_g - x^*\|^2 
    \notag\\
    &
    + (1 + \alpha \gamma \eta) (\beta^2 - \beta )\|x^k - x^k_g\|^2
    + \eta^2 \gamma^2 \EE_{k}[\| g^k \|^2] 
    \notag\\
    &
    - 2 \eta \gamma \langle \nabla f(x^k_g), \eta x^k_g + (1 - \eta)  x^k_f  - x^*\rangle 
    \notag\\
    &
    + \frac{\eta \gamma}{\alpha} \|\EE_k[g^k] - \nabla f(x^k_g)\|^2\,,
\end{align}
where $\alpha > 0$ is any positive constant.
\end{lemma}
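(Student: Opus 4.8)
The plan is to unwind the definition of $x^{k+1}$ through lines \ref{line_acc_1}, \ref{line_acc_2}, \ref{line_acc_3} of Algorithm \ref{alg:AGD_ASGD} and to express $x^{k+1} - x^*$ in a way that separates a deterministic ``accelerated-descent'' part from the stochastic gradient error. First I would substitute $x^{k+1}_f = x^k_g - \gamma g^k$ into $x^{k+1} = \eta x^{k+1}_f + (1-\eta) x^k_f$, so that
\begin{equation*}
x^{k+1} - x^* = \bigl(\eta x^k_g + (1-\eta) x^k_f - x^*\bigr) - \eta \gamma g^k\,.
\end{equation*}
Taking squared norms and conditional expectation $\E_k[\cdot]$, and splitting the cross term, gives
\begin{align*}
\E_k[\|x^{k+1}-x^*\|^2]
&= \|\eta x^k_g + (1-\eta) x^k_f - x^*\|^2
+ \eta^2 \gamma^2 \E_k[\|g^k\|^2]\\
&\quad - 2\eta\gamma \bigl\langle \E_k[g^k],\, \eta x^k_g + (1-\eta) x^k_f - x^*\bigr\rangle\,.
\end{align*}
Then I would replace $\E_k[g^k]$ by $\nabla f(x^k_g) + (\E_k[g^k] - \nabla f(x^k_g))$ in the inner product; the $\nabla f(x^k_g)$ piece produces exactly the $-2\eta\gamma\langle \nabla f(x^k_g), \eta x^k_g + (1-\eta) x^k_f - x^*\rangle$ term in \eqref{eq:acc_temp4}, and the residual inner product with the bias $\E_k[g^k]-\nabla f(x^k_g)$ is controlled by Young's inequality \eqref{eq:inner_prod} with parameter $\alpha$, producing $\alpha\gamma\eta\|\eta x^k_g+(1-\eta)x^k_f - x^*\|^2 + \tfrac{\eta\gamma}{\alpha}\|\E_k[g^k]-\nabla f(x^k_g)\|^2$. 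This accounts for the last term of \eqref{eq:acc_temp4} and introduces the $(1+\alpha\gamma\eta)$ factors.

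The next step is to handle the quantity $P := \|\eta x^k_g + (1-\eta) x^k_f - x^*\|^2$, now multiplied by $(1+\alpha\gamma\eta)$. The key algebraic observation is the parameter choice $\theta = (1-\eta)/(\beta - \eta)$, i.e. $\theta(\beta-\eta) = 1-\eta$; combined with $x^k_g = \theta x^k_f + (1-\theta) x^k$ from line \ref{line_acc_1} I want to rewrite the convex-ish combination $\eta x^k_g + (1-\eta) x^k_f$ in terms of $x^k$, $x^k_g$ (equivalently $x^k$, $x^k_f$) so that the coefficients become $\beta$ and $1-\beta$. Concretely, solving line \ref{line_acc_1} for $x^k_f = \tfrac{1}{\theta} x^k_g - \tfrac{1-\theta}{\theta} x^k$ and substituting should yield
\begin{equation*}
\eta x^k_g + (1-\eta) x^k_f = \beta x^k_g + (1-\beta) x^k\,,
\end{equation*}
since the $x^k_g$-coefficient becomes $\eta + (1-\eta)/\theta = \eta + (\beta-\eta) = \beta$ and the $x^k$-coefficient becomes $-(1-\eta)(1-\theta)/\theta = -(\beta-\eta)(1-\theta)$, which matches $1-\beta$ after using $\theta = (1-\eta)/(\beta-\eta)$. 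Then $P = \|\beta(x^k_g - x^*) + (1-\beta)(x^k-x^*)\|^2$, and expanding this squared norm of a (possibly non-convex, since $\beta$ need not lie in $[0,1]$) combination via the identity $\|\beta u + (1-\beta)v\|^2 = \beta\|u\|^2 + (1-\beta)\|v\|^2 - \beta(1-\beta)\|u-v\|^2$ gives exactly $\beta\|x^k_g-x^*\|^2 + (1-\beta)\|x^k-x^*\|^2 + (\beta^2-\beta)\|x^k-x^k_g\|^2$. Multiplying by $(1+\alpha\gamma\eta)$ reproduces the first three lines of \eqref{eq:acc_temp4}.

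Finally I would assemble all the pieces: the expansion of $\E_k[\|x^{k+1}-x^*\|^2]$, the Young-inequality bound on the bias cross term, and the algebraic identity for $P$. The main obstacle is purely bookkeeping: verifying that the substitution $\theta = (1-\eta)/(\beta-\eta)$ really collapses $\eta x^k_g + (1-\eta)x^k_f$ to $\beta x^k_g + (1-\beta)x^k$ with the correct signs, and keeping track of which terms get the $(1+\alpha\gamma\eta)$ prefactor (namely all terms originating from $P$, but not the $\eta^2\gamma^2\E_k[\|g^k\|^2]$ term, the deterministic gradient inner product, or the $\tfrac{\eta\gamma}{\alpha}$ bias term — since those come from the other parts of the expansion). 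No smoothness or strong convexity of $f$ is actually needed for this particular lemma despite the hypotheses being stated; Assumptions \ref{as:lipsh_grad} and \ref{as:strong_conv} are presumably invoked only to guarantee existence and uniqueness of $x^*$ and will be used in the subsequent descent lemma. I expect the identity-of-combinations step to be where a careless sign error would creep in, so I would double-check it by evaluating both sides at $\eta = 1$ (where $\theta = 0$, $x^k_g = x^k$, and both sides equal $\|x^k - x^*\|^2$ with $\beta = 1$) as a sanity check.
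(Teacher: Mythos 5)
Your proposal is correct and follows essentially the same route as the paper: unwind lines \ref{line_acc_3}, \ref{line_acc_2}, expand the square, split off the bias term $\EE_k[g^k]-\nabla f(x^k_g)$ and control it with \eqref{eq:inner_prod} (parameter $\alpha$), then use $\theta=(1-\eta)/(\beta-\eta)$ to rewrite $\eta x^k_g+(1-\eta)x^k_f=\beta x^k_g+(1-\beta)x^k$ and expand that square into the three $(1+\alpha\gamma\eta)$-weighted terms. The only cosmetic difference is that the paper splits the cross term into three pieces (including $g^k-\EE_k[g^k]$, which vanishes after conditioning) before taking $\E_k$, whereas you condition first — the two are equivalent.
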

\begin{proof}
We start with lines \ref{line_acc_3} and \ref{line_acc_2} of Algorithm \ref{alg:AGD_ASGD}:
\begin{align*}
    \|x^{k+1} - &x^*\|^2
    =
    \| \eta x^{k+1}_f + (1 - \eta)x^k_f - x^*\|^2 = 
    \| \eta x^k_g - \eta\gamma g^k + (1 - \eta)x^k_f - x^*\|^2
    \notag\\
    =&
    \| \eta x^k_g + (1 - \eta)x^k_f - x^*\|^2 + \gamma^2 \eta^2 \| g^k \|^2 
    - 2 \gamma \eta \langle  g^k, \eta x^k_g + (1 - \eta)x^k_f - x^*\rangle.
\end{align*}
Using straightforward algebra, we get
\begin{align*}
    \|x^{k+1} - x^*\|^2
    =&
    \| \eta x^k_g + (1 - \eta)x^k_f - x^*\|^2 
    - 2 \gamma \eta \langle \nabla f(x^k_g), \eta x^k_g + (1 - \eta)x^k_f - x^* \rangle
   \notag\\
    &
    - 2 \gamma \eta \langle \EE_k[g^k] - \nabla f(x^k_g), \eta x^k_g + (1 - \eta)x^k_f - x^* \rangle + \gamma^2 \eta^2 \| g^k \|^2
    \notag\\
    &
    - 2 \gamma \eta \langle g^k - \EE_k[g^k], \eta x^k_g + (1 - \eta)x^k_f - x^* \rangle
    \notag\\
    \leq&
    (1 + \alpha \eta \gamma)\| \eta x^k_g + (1 - \eta)x^k_f - x^* \|^2 + \frac{\gamma \eta}{\alpha} \|\EE_k[g^k] - \nabla f(x^k_g)\|^2.
    \notag\\
    &
    - 2 \gamma \eta \langle \nabla f(x^k_g), \eta x^k_g + (1 - \eta)x^k_f - x^* \rangle + \gamma^2 \eta^2 \| g^k \|^2
    \notag\\
    &
    - 2 \gamma \eta \langle g^k - \EE_k[g^k], \eta x^k_g + (1 - \eta)x^k_f - x^* \rangle
\end{align*}
In the last step we also applied Cauchy-Schwartz inequality in the form \eqref{eq:inner_prod} with $c > 0$. Taking the conditional expectation, we get
\begin{align}
\label{eq:acc_temp1}
\E_{k}[\|x^{k+1} - x^*\|^2] \leq& (1 + \alpha \eta \gamma) \|\eta x^k_g + (1 - \eta)x^k_f - x^*\|^2 
\notag\\
    &
    - 2 \gamma \eta \langle \nabla f(x^k_g), \eta x^k_g + (1 - \eta)x^k_f - x^* \rangle
    \notag\\
    &
    + \gamma^2 \eta^2 \E_{k}[\| g^k \|^2]  + \frac{\gamma \eta}{\alpha} \|\EE_k[g^k] - \nabla f(x^k_g)\|^2\,.
\end{align}
Now let us handle expression $\| \eta x^k_g + (1 - \eta)x^k_f  - x^*\|^2$ for a while. Taking into account line \ref{line_acc_1} and the choice of $\theta$ such that $\theta = (1 - \eta) / (\beta - \eta)$ (in particular, $\beta = \eta + (1-\eta)/\theta$ and $(1-\eta)(\theta - 1)/\theta = 1 - \beta$), we get
\begin{align*}
\eta x^k_g + (1 - \eta)  x^k_f &= \eta x^k_g + \frac{(1 - \eta)}{\theta}x^k_g - \frac{(1 - \eta)(1 - \theta)}{\theta} x^k = \beta x^k_g + (1 - \beta) x^k
\end{align*}
Substituting into $\|\eta x^k_g + (1 - \eta)x^k_f - x^*\|^2$, we get
\begin{align}
\label{eq:acc_temp3}
\| \eta &x^k_g + (1 - \eta) x^k_f - x^*\|^2 = \| \beta x^k_g + (1 - \beta) x^k - x^*\|^2
\notag\\
&=
\| x^k - x^* + \beta (x^k_g - x^k)\|^2
\notag\\
&=
\| x^k - x^*\|^2 + 2 \beta \langle x^k - x^*, x^k_g - x^k \rangle  + \beta^2\|x^k - x^k_g\|^2
\notag\\
&=
\| x^k - x^*\|^2 + \beta \left( \|x^k_g - x^*\|^2 - \| x^k - x^*\|^2 - \| x^k_g - x^k\|^2\right)  + \beta^2\|x^k - x^k_g\|^2
\notag\\
&=
( 1 - \beta) \| x^k - x^*\|^2 + \beta\|x^k_g - x^*\|^2  + (\beta^2 - \beta )\|x^k - x^k_g\|^2.
\end{align}
Combining  \eqref{eq:acc_temp3} with \eqref{eq:acc_temp1}, we finish the proof. \hfill$\square$
\end{proof}

\begin{lemma}
\label{lem:tech_lemma_acc_gd_2}
Let Assumptions \ref{as:lipsh_grad} and \ref{as:strong_conv} be satisfied. Let problem \eqref{eq:erm}  be solved by Algorithm \ref{alg:AGD_ASGD}. Then for any $u \in \R^d$, we get
\begin{align*}
        \EE_k[f(x^{k+1}_f)]
        \leq&
        f(u) - \langle \nabla f(x^k_g), u - x^k_g \rangle - \frac{\mu}{2} \| u - x^k_g\|^2  - \frac{\gamma}{2} \|\nabla f(x^k_g)\|^2 
        \\
        &
        + \frac{\gamma}{2} \|\EE_k[g^k] - \nabla f(x^k_g) \|^2 + \frac{L \gamma^2 }{2}\EE_k[\| g^k\|^2].
\end{align*}
\end{lemma}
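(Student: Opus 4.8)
The plan is to combine $L$-smoothness of $f$ at the update $x^{k+1}_f = x^k_g - \gamma g^k$ with $\mu$-strong convexity, the latter used to relate $f(x^k_g)$ to $f(u)$ at an arbitrary point $u$. First I would invoke Assumption~\ref{as:lipsh_grad} in its descent-lemma form applied to the step from $x^k_g$ to $x^{k+1}_f$:
\[
f(x^{k+1}_f) \leq f(x^k_g) + \langle \nabla f(x^k_g), x^{k+1}_f - x^k_g \rangle + \frac{L}{2}\|x^{k+1}_f - x^k_g\|^2 = f(x^k_g) - \gamma \langle \nabla f(x^k_g), g^k \rangle + \frac{L\gamma^2}{2}\|g^k\|^2.
\]
Taking $\E_k$ and using that $\E_k[g^k]$ is what appears linearly, this becomes $\E_k[f(x^{k+1}_f)] \leq f(x^k_g) - \gamma \langle \nabla f(x^k_g), \E_k[g^k]\rangle + \frac{L\gamma^2}{2}\E_k[\|g^k\|^2]$.

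Next I would rewrite the inner product by adding and subtracting $\nabla f(x^k_g)$: $-\gamma\langle \nabla f(x^k_g), \E_k[g^k]\rangle = -\gamma\|\nabla f(x^k_g)\|^2 - \gamma\langle \nabla f(x^k_g), \E_k[g^k] - \nabla f(x^k_g)\rangle$, and bound the cross term by Cauchy--Schwarz in the form \eqref{eq:inner_prod} with $c = 1$ (splitting into $\tfrac{\gamma}{2}\|\nabla f(x^k_g)\|^2 + \tfrac{\gamma}{2}\|\E_k[g^k]-\nabla f(x^k_g)\|^2$). This collapses the $-\gamma\|\nabla f(x^k_g)\|^2$ down to $-\tfrac{\gamma}{2}\|\nabla f(x^k_g)\|^2$ and produces the error term $+\tfrac{\gamma}{2}\|\E_k[g^k]-\nabla f(x^k_g)\|^2$ seen in the claim.

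Finally I would replace $f(x^k_g)$ by $f(u)$ for an arbitrary $u$ via $\mu$-strong convexity (Assumption~\ref{as:strong_conv}), which gives $f(x^k_g) \leq f(u) - \langle \nabla f(x^k_g), u - x^k_g\rangle - \tfrac{\mu}{2}\|u - x^k_g\|^2$; substituting yields exactly the stated inequality. No step here is a genuine obstacle — it is a standard one-step descent estimate — but the point requiring a little care is the bookkeeping of which quantities survive the conditional expectation (the estimator enters linearly only through $\E_k[g^k]$, while the quadratic term keeps $\E_k[\|g^k\|^2]$), and choosing the Cauchy--Schwarz constant so that the coefficient of $\|\nabla f(x^k_g)\|^2$ ends up being exactly $-\gamma/2$ rather than something weaker.
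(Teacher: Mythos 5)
Your proposal is correct and follows essentially the same route as the paper's proof: an $L$-smoothness descent step at $x^{k+1}_f = x^k_g - \gamma g^k$, splitting the inner product around $\E_k[g^k]$, Cauchy--Schwarz \eqref{eq:inner_prod} with $c=1$, and then strong convexity at an arbitrary $u$. The only (immaterial) difference is that you take the conditional expectation before decomposing the inner product, whereas the paper decomposes first and lets the $\langle \nabla f(x^k_g), g^k - \E_k[g^k]\rangle$ term vanish under $\E_k$; both rely on the same measurability of $\nabla f(x^k_g)$ with respect to $\mathcal{F}_k$.
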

\begin{proof}
Using  Assumption \ref{as:lipsh_grad} and line \ref{line_acc_2} of Algorithm \ref{alg:AGD_ASGD}, we get
\begin{align*}
f(x^{k+1}_f) &\leq f(x^k_g) + \langle \nabla f(x^k_g), x^{k+1}_f - x^k_g \rangle + \frac{L}{2}\| x^{k+1}_f - x^k_g\|^2 \\
        &= f(x^k_g) -  \gamma \langle \nabla f(x^k_g), g^k \rangle + \frac{L  \gamma^2 }{2}\| g^k\|^2 \\
        &=
        f(x^k_g) -  \gamma  \langle \nabla f(x^k_g), \nabla f(x^k_g) \rangle - \gamma \langle \nabla f(x^k_g), \EE_k[g^k] - \nabla f(x^k_g) \rangle 
        \\
        &\quad- \gamma \langle \nabla f(x^k_g), g^k - \EE_k[g^k] \rangle + \frac{L\gamma^2}{2}\| g^k\|^2
        \\
        &\leq
        f(x^k_g) - \gamma \|\nabla f(x^k_g)\|^2 + \frac{\gamma}{2} \| \nabla f(x^k_g) \|^2 + \frac{\gamma}{2} \|\EE_k[g^k] - \nabla f(x^k_g) \|^2
        \\
        &\quad- \gamma \langle \nabla f(x^k_g), g^k - \EE_k[g^k] \rangle + \frac{L \gamma^2 }{2}\| g^k\|^2.
\end{align*}
Here we also used Cauchy-Schwartz inequality \eqref{eq:inner_prod} with $a =  \nabla f(x^k_g)$, $b =  \nabla f(x^k_g) - \EE_k[g^k]$ and $c = 1$. Taking the conditional expectation, we get
\begin{align*}
\EE_k[f(x^{k+1}_f)] \leq& f(x^k_g) - \frac{\gamma}{2} \|\nabla f(x^k_g)\|^2 + \frac{\gamma}{2} \|\EE_k[g^k] - \nabla f(x^k_g) \|^2 + \frac{L\gamma^2 }{2}\EE_k[\| g^k\|^2].
\end{align*}
Using Assumption \ref{as:strong_conv}  with $x= u$ and $y= x^k_g$, one can conclude that for any $u \in \R^d$ it holds
    \begin{align*}
        \EE_k[f(x^{k+1}_f)]
        \leq&
        ~f(u) - \langle \nabla f(x^k_g), u - x^k_g \rangle - \frac{\mu}{2} \| u - x^k_g\|^2  - \frac{\gamma}{2} \|\nabla f(x^k_g)\|^2 
        \\
        &+ \frac{\gamma}{2} \|\EE_k[g^k] - \nabla f(x^k_g) \|^2 + \frac{L \gamma^2 }{2}\EE_k[\| g^k\|^2].~~~~~~~~~~~~~~~~~~~~~~~~~~~~\square
    \end{align*}
\end{proof}
Taking into account all of the considerations above, we
can prove the following result:
\begin{theorem}\label{th:acc}
Consider Assumptions \ref{as:lipsh_grad} -- \ref{as:bounded_markov_noise_UGE}. Let the problem \eqref{eq:erm}
be solved by Algorithm \ref{alg:AGD_ASGD}. Then for $\beta, \theta, \eta, \gamma, \batchbound$ satisfying 
\begin{align*}
M = (1 + 2/&\beta),\quad
\beta = \sqrt{\frac{4\mu\gamma}{9}}, \quad\eta = \frac{9\beta}{2\mu \gamma} = \sqrt{\frac{9}{\mu \gamma}},
\\
\gamma \lesssim &\min \Bigg\{\frac{\mu^3}{\delta^4\tau^2}; \frac{1}{L}\Bigg\}, \quad  \theta = \frac{1 - \eta}{\beta - \eta},
\end{align*}
it holds that
\begin{align*}
    \EE\Bigg[&\|x^{N} - x^*\|^2 + \frac{18}{\mu} (f(x^{N}_f) - f(x^*)) \Bigg]
    \\
    &\lesssim 
    \exp\left( - N\sqrt{\frac{\mu \gamma}{9}}\right) \left[\| x^0 - x^*\|^2 + \frac{18}{\mu} (f(x^0) - f(x^*)) \right]
    + \frac{\sqrt{\gamma}}{\mu^{3/2}} C_{1} \taumix \log_2 M \cmax^2.
\end{align*}
\end{theorem}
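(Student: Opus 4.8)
The plan is to run a Lyapunov argument with the potential
$\Psi^k := \|x^k - x^*\|^2 + \tfrac{18}{\mu}\bigl(f(x^k_f) - f(x^*)\bigr)$,
establishing a one-step inequality $\E_k[\Psi^{k+1}] \le (1 - \tfrac{\beta}{2})\Psi^k + O\!\bigl(\tfrac{\gamma}{\mu} C_1 \taumix \log_2 M\,\cmax^2\bigr)$ and then taking total expectations and unrolling over $k = 0,\dots,N-1$. The first step merges the two descent lemmas. In \eqref{eq:acc_temp4} the only quantity not yet in $\Psi$-form is the scalar product $-2\eta\gamma\langle \nabla f(x^k_g),\, \eta x^k_g + (1-\eta)x^k_f - x^*\rangle$; rewriting $\eta x^k_g + (1-\eta)x^k_f - x^* = (x^k_g - x^*) - (\eta-1)(x^k_f - x^k_g)$, I bound the component along $x^k_g-x^*$ with Lemma \ref{lem:tech_lemma_acc_gd_2} at $u=x^*$ and the component along $x^k_f-x^k_g$ with Lemma \ref{lem:tech_lemma_acc_gd_2} at $u=x^k_f$. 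The parameter identities $\beta=\sqrt{4\mu\gamma/9}$, $\eta=9\beta/(2\mu\gamma)$ are tuned precisely so that $2\eta^2\gamma=18/\mu$ and $2\eta\gamma(\eta-1)=\tfrac{18}{\mu}(1-\beta/2)$: after the substitution the $\E_k[f(x^{k+1}_f)]$ contributions collapse to $-\tfrac{18}{\mu}\E_k[f(x^{k+1}_f)]$, which moves to the left-hand side and forms the function-value part of $\E_k[\Psi^{k+1}]$, while the surviving $f(x^k_f)-f(x^*)$ carries coefficient $\tfrac{18}{\mu}(1-\beta/2)$; the $\|x^k-x^*\|^2$ part contracts at the faster rate $\beta$ through the factor $(1-\beta)$ already present in \eqref{eq:acc_temp4}.

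Next I collect the stochastic terms. Wherever $\E_k[\|g^k\|^2]$ occurs I write $\E_k[\|g^k\|^2] = \|\E_k[g^k]\|^2 + \E_k[\|g^k - \E_k[g^k]\|^2] \le 2\|\nabla f(x^k_g)\|^2 + 2\|\E_k[g^k] - \nabla f(x^k_g)\|^2 + \E_k[\|g^k - \nabla f(x^k_g)\|^2]$ and invoke Lemma \ref{lem:expect_bound_grad_appendix}: the last two summands, together with the $\|\E_k[g^k]-\nabla f(x^k_g)\|^2$ terms appearing explicitly in \eqref{eq:acc_temp4} and in Lemma \ref{lem:tech_lemma_acc_gd_2}, are each at most a constant multiple of $C_1\taumix\log_2 M\,(\cmax^2+\dmax^2\|x^k_g-x^*\|^2)$, while the bias-only part is $O(M^{-1})(\cmax^2+\dmax^2\|x^k_g-x^*\|^2)$. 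Since $\eta^2\gamma^2=9\gamma/\mu$ and $\gamma\le 1/L$, the $\|\nabla f(x^k_g)\|^2$ pieces generated this way are absorbed by the negative $-\tfrac{\gamma}{2}\|\nabla f(x^k_g)\|^2$-type terms produced by the two uses of Lemma \ref{lem:tech_lemma_acc_gd_2} (choosing the hidden absolute constant in $\gamma\lesssim 1/L$ small enough), the free constant in \eqref{eq:acc_temp4} is set to $\alpha\asymp\mu$ so that $\tfrac{\eta\gamma}{\alpha}\|\E_k[g^k]-\nabla f(x^k_g)\|^2$ scales like $\tfrac{\gamma}{\mu}$ times the bias bound, and the choice $M=1+2/\beta$ makes that bias term only an $O(\beta)$ perturbation of the contraction, hence harmless.

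The step I expect to be the main obstacle is controlling the distance-dependent noise $\dmax^2\|x^k_g-x^*\|^2$ and pushing it back into $\Psi^k$. Using $x^k_g=\theta x^k_f+(1-\theta)x^k$ with $\theta=(\eta-1)/(\eta-\beta)\in(0,1)$ close to $1$, inequality \eqref{eq:inner_prod_and_sqr} gives $\|x^k_g-x^*\|^2\le(1+\tfrac1c)\theta\|x^k_f-x^*\|^2+(1+c)(1-\theta)\|x^k-x^*\|^2$, and $\mu$-strong convexity (Assumption \ref{as:strong_conv}) gives $\|x^k_f-x^*\|^2\le\tfrac{2}{\mu}(f(x^k_f)-f(x^*))$, so $\dmax^2\|x^k_g-x^*\|^2\lesssim\dmax^2\Psi^k$. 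The accumulated coefficient of $\dmax^2\|x^k_g-x^*\|^2$ over all noise terms is of order $\tfrac{\gamma}{\mu}C_1\taumix\log_2 M$; this must be dominated by the net negative weight on $\|x^k_g-x^*\|^2$, which is of order $\sqrt{\mu\gamma}$ because the negative $-\tfrac{\mu}{2}\|u-x^k_g\|^2$ at $u=x^*$ in Lemma \ref{lem:tech_lemma_acc_gd_2} (used with weight $\asymp\eta\gamma$, giving $\asymp\eta\gamma\mu\|x^k_g-x^*\|^2=3\sqrt{\mu\gamma}\|x^k_g-x^*\|^2$) beats the positive $(1+\alpha\gamma\eta)\beta\|x^k_g-x^*\|^2\asymp\tfrac23\sqrt{\mu\gamma}\|x^k_g-x^*\|^2$ from \eqref{eq:acc_temp4} (the negative $(\beta^2-\beta)\|x^k-x^k_g\|^2$ term, which is nonpositive since $0<\beta<1$, is available for the cross terms in $\|x^k-x^k_g\|^2$). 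Demanding $\tfrac{\gamma}{\mu}C_1\taumix\log_2 M\,\dmax^2\lesssim\sqrt{\mu\gamma}$, i.e.\ $\gamma\lesssim\mu^3/(\dmax^4\taumix^2)$ up to the absolute and logarithmic constants, is exactly the stepsize restriction in the statement and is what lets the distance-dependent noise be swallowed by the contraction, leaving only the constant residual $\tfrac{\gamma}{\mu}C_1\taumix\log_2 M\,\cmax^2$. Checking that all these sign conditions hold simultaneously for the single prescribed tuple $(\beta,\theta,\eta,\gamma,M)$ is the computation-heavy core of the proof.

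Finally, having established $\E_k[\Psi^{k+1}]\le(1-\tfrac\beta2)\Psi^k + \tfrac{c'\gamma}{\mu}C_1\taumix\log_2 M\,\cmax^2$, I take total expectation, iterate from $k=0$, bound $(1-\tfrac\beta2)^N\le\exp(-N\beta/2)=\exp(-N\sqrt{\mu\gamma/9})$ for the initial-condition term, and sum $\sum_{j\ge0}(1-\tfrac\beta2)^j\le 2/\beta\asymp(\mu\gamma)^{-1/2}$ for the noise term, which yields the residual $\tfrac{\gamma}{\mu}\cdot(\mu\gamma)^{-1/2}\cdot C_1\taumix\log_2 M\,\cmax^2=\tfrac{\sqrt\gamma}{\mu^{3/2}}C_1\taumix\log_2 M\,\cmax^2$, giving the claimed inequality.
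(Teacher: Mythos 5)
Your plan follows essentially the same route as the paper's proof: the same Lyapunov function $\|x^k-x^*\|^2+2\gamma\eta^2(f(x^k_f)-f(x^*))=\|x^k-x^*\|^2+\tfrac{18}{\mu}(f(x^k_f)-f(x^*))$, combining Lemma \ref{lem:tech_lemma_acc_gd} with Lemma \ref{lem:tech_lemma_acc_gd_2} at $u=x^*$ and $u=x^k_f$ with exactly the weights forced by $\eta\gamma=9\beta/(2\mu)$, the choice $\alpha=\beta/(2\eta\gamma)\asymp\mu$, absorbing the $\dmax^2\|x^k_g-x^*\|^2$ noise from Lemma \ref{lem:expect_bound_grad_appendix} into the net negative coefficient $(1+\alpha\gamma\eta)\beta-\gamma\eta\mu$ under $\gamma\lesssim\mu^3/(\dmax^4\taumix^2)$ and $M=1+2/\beta$, and the same recursion with geometric sum $\lesssim 2/\beta$. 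The only cosmetic differences (splitting $\E_k[\|g^k\|^2]$ via a variance decomposition instead of the paper's Cauchy--Schwarz step, and the optional detour through $\|x^k_g-x^*\|^2\lesssim\Psi^k$) do not change the argument, so the proposal is correct.
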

\begin{proof}
Using Lemma \ref{lem:tech_lemma_acc_gd_2} with $u = x^*$ and $u = x^k_f$, we get
\begin{align*}
\EE_k[f(x^{k+1}_f)]
\leq&
f(x^*) - \langle \nabla f(x^k_g), x^* - x^k_g \rangle - \frac{\mu}{2} \|x^* - x^k_g\|^2  - \frac{\gamma}{2} \|\nabla f(x^k_g)\|^2 \\
&+ \frac{\gamma}{2} \|\EE_k[g^k] - \nabla f(x^k_g) \|^2 + \frac{L\gamma^2 }{2}\EE_k[\| g^k\|^2],
\end{align*}
\begin{align*}
        \EE_k[f(x^{k+1}_f)]
        \leq&
        f(x^k_f) - \langle \nabla f(x^k_g), x^k_f - x^k_g \rangle - \frac{\mu}{2} \| x^k_f - x^k_g\|^2  - \frac{\gamma}{2} \|\nabla f(x^k_g)\|^2 
        \\
        &+ \frac{\gamma}{2} \|\EE_k[g^k] - \nabla f(x^k_g) \|^2 + \frac{L \gamma^2 }{2}\EE_k[\| g^k\|^2].
\end{align*}
Summing the first inequality with coefficient $2\gamma \eta  $, the second with coefficient $2 \gamma \eta (\eta - 1) $ and \eqref{eq:acc_temp4}, we obtain
\begin{align*}
    \E_k[\|&x^{k+1} - x^*\|^2 + 2 \gamma \eta^2 f(x^{k+1}_f)]
    \\
    \notag \leq&
    (1 + \alpha \gamma \eta)( 1 - \beta) \| x^k - x^*\|^2 + (1 + \alpha \gamma \eta) \beta\|x^k_g - x^*\|^2 
    \\
    &+ (1 + \alpha \gamma \eta) (\beta^2 - \beta )\|x^k - x^k_g\|^2 - 2 \eta \gamma \langle \nabla f(x^k_g), \eta x^k_g + (1 - \eta)  x^k_f - x^*\rangle 
    \\
    &+ \eta^2 \gamma^2 \EE_{k}[\| g^k \|^2] + \frac{\eta \gamma}{\alpha} \|\EE_k[g^k] - \nabla f(x^k_g)\|^2
    \\
    & + 2 \gamma \eta \Big (f(x^*) - \langle \nabla f(x^k_g), x^* - x^k_g \rangle - \frac{\mu}{2} \|x^* - x^k_g\|^2  - \frac{\gamma}{2} \|\nabla f(x^k_g)\|^2 
    \\
        &+ \frac{\gamma}{2} \|\EE_k[g^k] - \nabla f(x^k_g) \|^2 + \frac{L \gamma^2 }{2}\EE_k[\| g^k\|^2]\Big)
    \\
    & + 2 \gamma \eta (\eta - 1) \Big ( f(x^k_f) - \langle \nabla f(x^k_g), x^k_f - x^k_g \rangle - \frac{\mu}{2} \| x^k_f - x^k_g\|^2  - \frac{\gamma}{2} \|\nabla f(x^k_g)\|^2 
        \\
        &+ \frac{\gamma}{2} \|\EE_k[g^k] - \nabla f(x^k_g) \|^2 + \frac{L \gamma^2 }{2}\EE_k[\| g^k\|^2]  
    \Big)
    \\
    \notag =&
    (1 + \alpha \gamma \eta)( 1 - \beta) \| x^k - x^*\|^2 + 2 \gamma \eta \left( \eta - 1\right)(f(x^{k}_f) - 2 \gamma \eta f(x^*))
    \\
    &\notag
    + \left((1 + \alpha \gamma \eta) \beta - \gamma \eta \mu\right)\|x^k_g - x^*\|^2
    \\
    &\notag
    + (1 + \alpha \gamma \eta) (\beta^2 - \beta )\|x^k - x^k_g\|^2 
    - \gamma^2 \eta^2 \|\nabla f(x^k_g)\|^2 
    \\
    &\notag
    + \left( \frac{\eta \gamma}{\alpha} + \gamma^2 \eta^2 \right) \|\EE_k[g^k] - \nabla f(x^k_g) \|^2 + \left( \eta^2 \gamma^2 + \gamma^3 \eta^2 L \right) \EE_k[\| g^k\|^2]
    \\
    \notag \leq&
    (1 + \alpha \gamma \eta)( 1 - \beta) \| x^k - x^*\|^2 + 2 \gamma \eta \left( \eta - 1\right)(f(x^{k}_f) - 2 \gamma \eta f(x^*))
    \\
    &\notag
    + \left((1 + \alpha \gamma \eta) \beta - \gamma \eta \mu\right)\|x^k_g - x^*\|^2
    \\
    &\notag
    + (1 + \alpha \gamma \eta) (\beta^2 - \beta )\|x^k - x^k_g\|^2 
    - \gamma^2 \eta^2 \|\nabla f(x^k_g)\|^2 
    \\
    &\notag
    + \eta \gamma \left( \frac{1}{\alpha} + \gamma \eta \right) \|\EE_k[g^k] - \nabla f(x^k_g) \|^2 + 8 \eta^2 \gamma^2 \left(  1 +  \gamma L \right) \EE_k[\| g^k - \nabla f(x^k_g)\|^2] 
    \\
    &\notag
    + \frac{1}{2} \eta^2 \gamma^2 \left(  1 +  \gamma L \right) \EE_k[\| \nabla f(x^k_g) \|^2]\,.
\end{align*}
In the last step, we also used \eqref{eq:inner_prod_and_sqr} with $c = 4$. Since $\gamma \leq \tfrac{9}{16L}$, the choice of $\alpha = \frac{\beta}{2\eta \gamma}$, $\beta  = \sqrt{16\mu \gamma / 9}$ gives
\begin{align*}
    &\beta  = \sqrt{16\mu \gamma / 9} \leq \sqrt{\mu / L} \leq 1,\\
    &(1 + \alpha \eta \gamma) (1 - \beta) = \left(1 + \frac{\beta}{2}\right) \left( 1 - \beta\right) \leq \left( 1 - \frac{\beta}{2}\right), 
\end{align*}
and, therefore,
\begin{align*}
\E_{k}\bigl[\|x^{k+1} - x^*\|^2 &+ 2\gamma \eta^2 f(x^{k+1}_f)\bigr] 
\\
\leq&
    (1 - \beta / 2) \| x^k - x^*\|^2 + 2 \gamma \eta \left( \eta - 1\right)(f(x^{k}_f) - 2 \gamma \eta f(x^*))
    \\
&\notag
    + \eta^2 \gamma^2 \left( 1 + 2 /\beta \right) \|\EE_k[g^k] - \nabla f(x^k_g) \|^2
     \\
&\notag
    + 8 \eta^2 \gamma^2 \left(  1 +  \gamma L \right) \EE_k[\| g^k - \nabla f(x^k_g)\|^2] 
    \\
&\notag 
    + \left((1 + \alpha \gamma \eta) \beta - \gamma \eta \mu\right)\|x^k_g - x^*\|^2
\end{align*}
Subtracting $2\gamma \eta^2 f(x^*)$ from both sides, we get
\begin{align*}
    \E_{k}\bigl[\|x^{k+1} - x^*\|^2 &+ 2\gamma \eta^2 (f(x^{k+1}_f) - f(x^*))\bigr]
    \\
    \leq&
    \left( 1 - \beta / 2\right) \| x^k - x^*\|^2 + \left( 1 - 1/\eta\right) \cdot 2\gamma \eta^2 (f(x^k_f) - f(x^*))
    \\
    &\notag
    + \eta^2 \gamma^2 \left( 1 + 2 /\beta \right) \|\EE_k[g^k] - \nabla f(x^k_g) \|^2 
     \\
    &\notag
    + 8 \eta^2 \gamma^2 \left(  1 +  \gamma L \right) \EE_k[\| g^k - \nabla f(x^k_g)\|^2] 
    \\
    &\notag 
    + \left((1 + \alpha \gamma \eta) \beta - \gamma \eta \mu\right)\|x^k_g - x^*\|^2
\end{align*}
Applying Lemma \ref{lem:expect_bound_grad_appendix} and $\gamma L \leq 1$, one can obtain
\begin{align*}
    \E_{k}\bigl[\|x^{k+1} - x^*\|^2 &+ 2\gamma \eta^2 (f(x^{k+1}_f) - f(x^*))\bigr]
    \\
    \leq&
    \left( 1 - \beta / 2\right) \| x^k - x^*\|^2 + \left( 1 - 1/\eta\right) \cdot 2\gamma \eta^2 (f(x^k_f) - f(x^*))
    \\
    &\notag
    + \eta^2 \gamma^2 \left( 1 + 2 /\beta \right) \cdot 2C_{1}\taumix \batchbound^{-1} (\cmax^2 + \dmax^2 \| x^k_g - x^*\|^2) 
    \\
    &\notag
    + 16 \eta^2 \gamma^2 \cdot 13C_{1} \taumix \log_2 M (\cmax^2 + \dmax^2 \| x^k_g - x^* \|^2) 
    \\
    &\notag 
    + \left((1 + \alpha \gamma \eta) \beta - \gamma \eta \mu\right)\|x^k_g - x^*\|^2
\end{align*}
With $M \geq (1 + 2/\beta)$, $\sqrt{\gamma} \leq \frac{\mu^{\frac{3}{2}}}{1872C_1\tau\delta^2\log_2 M}$, $\alpha = \frac{\beta}{2\gamma\eta}$, $\beta = \frac{2}{3}\sqrt{\mu\gamma}$ and $\eta = \sqrt{\frac{9}{\mu\gamma}},$ we have:
\begin{align*}
    \centering
    (1 + \alpha \gamma \eta) \beta &- \gamma \eta \mu +  \eta^2 \gamma^2 \delta^2 \big(\left( 1 + 2 /\beta \right) \cdot 2C_{1}\taumix \batchbound^{-1} + 208 C_{1} \taumix \log_2 M\big) \\
    &\leq (1 + \alpha \gamma \eta)\beta - 3\sqrt{\mu\gamma} + \frac{C_1\taumix\delta^2\gamma}{\mu}\big(18+1872\log_2 M\big) \\
    &\leq \sqrt{\mu\gamma} - 3\sqrt{\mu\gamma} + 2\sqrt{\mu\gamma} \leq 0,
\end{align*}
and then,
\begin{align*}
    \E_{k}\bigl[\|x^{k+1} &- x^*\|^2 + 2\gamma \eta^2 (f(x^{k+1}_f) - f(x^*))\bigr]
    \\
    \leq&
    \big( 1 - \beta / 2\big) \| x^k - x^*\|^2 + \left( 1 - 1/\eta\right) \cdot 2\gamma \eta^2 (f(x^k_f) - f(x^*))
    \\
    &\notag
    + \Big(\eta^2 \gamma^2 \left( 1 + 2 /\beta \right) \cdot 2C_{1}\taumix \batchbound^{-1} + 16 \eta^2 \gamma^2 \cdot 13C_{1} \taumix \log_2 M\Big) \cmax^2 
    \\
    \leq&
    \max\left\{ \left( 1 - \beta / 2\right), \left( 1 - 1/\eta\right)\right\} \left[\| x^k - x^*\|^2 + 2\gamma \eta^2 (f(x^k_f) - f(x^*)) \right]
    \\
    &\notag
    + \Big(\eta^2 \gamma^2  \left( 1 + 2 /\beta \right) \cdot 2C_{1}\taumix \batchbound^{-1} + 16 \eta^2 \gamma^2 \cdot 13C_{1} \taumix \log_2 M\Big) \cmax^2.
\end{align*}
Using that $\eta \gamma = 9\beta / (2\mu)$, $\beta/2 = 1 /\eta$ and $\gamma \leq L^{-1}$, we have 
\begin{align*}
    \E_{k}\bigl[\|x^{k+1} - x^*\|^2 &+ 2\gamma \eta^2 (f(x^{k+1}_f) - f(x^*))\bigr]
    \\
    &\leq
    \left( 1 - \beta / 2\right) \left[\| x^k - x^*\|^2 + 2\gamma \eta^2 (f(x^k_f) - f(x^*)) \right]
    \\
    &\notag
    + \frac{81}{4}\beta^2 \mu^{-2} \Big(\left( 1 + 2 /\beta \right) \cdot 2C_{1}\taumix \batchbound^{-1} + 208 C_{1} \taumix \log_2 M\Big) \cmax^2
\end{align*}
Finally, we perform the recursion and substitute $\beta = \sqrt{4\mu \gamma /9}$:
\begin{align*}
    \EE\bigl[\|x^{N} - x^*\|^2 &+ 2\gamma \eta^2 (f(x^{N}_f) - f(x^*)) \bigr]
    \\
    \leq&
    \left( 1 - \sqrt{\frac{ \mu \gamma}{9}}\right)^N [\| x^0 - x^*\|^2 + 2\gamma \eta^2 (f(x^0_f) - f(x^*))]
    \\&
    + \frac{81}{2}\beta \mu^{-2} \Big(\left( 1 + 2 /\beta \right) \cdot 2C_{1}\taumix \batchbound^{-1} + 208 C_{1} \taumix \log_2 M\Big) \cmax^2
    \\
    \leq&
    \exp\left( - \sqrt{\frac{\mu \gamma N^2}{9}}\right) [\| x^0 - x^*\|^2 + 2\gamma \eta^2 (f(x^0_f) - f(x^*))]
    \\&
    + \frac{81 \sqrt{\gamma}}{\mu^{3/2}} C_{1} \taumix \Big(1 + 104 \log_2 M\Big) \cmax^2\,.
\end{align*}
Substituting of $\eta = \sqrt{\tfrac{9}{\mu \gamma}}$ concludes the proof.\hfill$\square$
\end{proof}
\begin{corollary}[\textnormal{Step tuning for Theorem \ref{th:acc}}]
    Under the conditions of Theorem \ref{th:acc}, choosing $\gamma$ as 
    \begin{equation*}
        \gamma \lesssim \min\left\{\frac{\mu^3}{\delta^4\tau^2} ; \frac{1}{L} ; \frac{1}{\mu N^2}\ln^2\left( \frac{\mu^2 N[\| x^0 - x^*\|^2 + 18\mu^{-1} (f(x^0_f) - f(x^*))]}{ \tau\sigma^2}\right)\right\},
    \end{equation*}
    in order to achieve $\epsilon$-approximate solution (in terms of $\EE\bigl[\|x^{N} - x^*\|^2\bigr] \lesssim \epsilon$) it takes
    \begin{equation*}
        \Tilde{\mathcal{O}}\left(\left(\sqrt{\frac{L}{\mu}} + \frac{\tau\delta^2}{\mu^2}\right)\log\left(\frac{1}{\epsilon}\right) + \frac{\tau\sigma^2}{\mu^2 \epsilon}\right) \text{~oracle calls.}
    \end{equation*}
\end{corollary}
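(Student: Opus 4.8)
The plan is to start from the conclusion of Theorem~\ref{th:acc}, namely
\[
\EE\bigl[\|x^N-x^*\|^2\bigr] \le \EE\Bigl[\|x^N-x^*\|^2 + \tfrac{18}{\mu}(f(x^N_f)-f(x^*))\Bigr] \lesssim \exp\!\Bigl(-N\sqrt{\tfrac{\mu\gamma}{9}}\Bigr)R_0 + \frac{\sqrt{\gamma}}{\mu^{3/2}}C_1\taumix\log_2 M\,\cmax^2,
\]
where $R_0 := \|x^0-x^*\|^2 + \tfrac{18}{\mu}(f(x^0)-f(x^*))$, and to balance the two terms on the right-hand side by an appropriate choice of $\gamma$. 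This is the standard ``tuning the stepsize in an accelerated SGD bound'' argument: one wants the exponentially-decaying bias term and the $\sqrt\gamma$-scaling variance term to be of the same order~$\epsilon$, subject to the hard constraints $\gamma \lesssim \mu^3/(\delta^4\taumix^2)$ and $\gamma\lesssim 1/L$ inherited from Theorem~\ref{th:acc}.

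First I would handle the variance term: requiring $\tfrac{\sqrt\gamma}{\mu^{3/2}}C_1\taumix\log_2 M\,\cmax^2 \lesssim \epsilon$ forces $\sqrt\gamma \lesssim \tfrac{\mu^{3/2}\epsilon}{C_1\taumix\log_2 M\,\cmax^2}$, i.e. $\gamma\lesssim \tfrac{\mu^3\epsilon^2}{\taumix^2\cmax^4\log_2^2 M}$ up to logs; but rather than plugging $\epsilon$ into $\gamma$ directly, the cleaner route (and the one matching the stated $\gamma$) is to choose
\[
\gamma \;\asymp\; \min\Bigl\{\tfrac{\mu^3}{\delta^4\taumix^2},\ \tfrac{1}{L},\ \tfrac{1}{\mu N^2}\ln^2\!\bigl(\tfrac{\mu^2 N R_0}{\taumix\cmax^2}\bigr)\Bigr\},
\]
so that the exponential factor $\exp(-N\sqrt{\mu\gamma/9})$ is driven down to roughly $\tfrac{\taumix\cmax^2}{\mu^2 N R_0}$ whenever the third term is the active one in the minimum. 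Substituting this into the bias term $\exp(-N\sqrt{\mu\gamma/9})R_0$ then yields a contribution of order $\tfrac{\taumix\cmax^2}{\mu^2 N}$, and substituting the same $\gamma$ (with $\sqrt\gamma\lesssim \tfrac{1}{\sqrt\mu N}\ln(\cdots)$) into the variance term gives $\tfrac{\sqrt\gamma}{\mu^{3/2}}C_1\taumix\cmax^2\log_2 M \lesssim \tfrac{\taumix\cmax^2\log_2 M}{\mu^2 N}\ln(\cdots)$, i.e. the two terms are balanced up to logarithmic factors, and the total error is $\tilde{\mathcal O}\bigl(\tfrac{\taumix\cmax^2}{\mu^2 N}\bigr)$.

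Next I would translate the error bound into an iteration count: setting $\tilde{\mathcal O}\bigl(\tfrac{\taumix\cmax^2}{\mu^2 N}\bigr)\lesssim\epsilon$ gives $N = \tilde{\mathcal O}\bigl(\tfrac{\taumix\cmax^2}{\mu^2\epsilon}\bigr)$ from the variance/statistical part; meanwhile, in the regime where the first or second entry of the $\min$ defining $\gamma$ is active (i.e. for moderate accuracy the stepsize is capped by $1/L$ or $\mu^3/(\delta^4\taumix^2)$), the exponential term alone requires $N \gtrsim \sqrt{1/(\mu\gamma)}\ln(1/\epsilon) = \tilde{\mathcal O}\bigl((\sqrt{L/\mu} + \taumix\delta^2/\mu^2)\ln(1/\epsilon)\bigr)$. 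Taking the maximum — equivalently, the sum — of these two requirements gives the claimed iteration (and oracle-call) complexity $\tilde{\mathcal O}\bigl((\sqrt{L/\mu}+\taumix\delta^2/\mu^2)\log(1/\epsilon) + \taumix\cmax^2/(\mu^2\epsilon)\bigr)$, where I also use that each iteration of Algorithm~\ref{alg:AGD_ASGD} consumes at most $2^{J_k}$ samples with $\EE[2^{J_k}\mathbf 1\{2^{J_k}\le M\}] = \mathcal O(\log_2 M) = \tilde{\mathcal O}(1)$ oracle calls in expectation, so the oracle-call count matches the iteration count up to logarithmic factors.

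The main obstacle is purely bookkeeping rather than conceptual: one must verify that with the stated three-way-minimum choice of $\gamma$ the logarithmic argument $\ln^2\bigl(\tfrac{\mu^2 N R_0}{\taumix\cmax^2}\bigr)$ is genuinely positive and that the induced exponential decay is at least $\tfrac{\taumix\cmax^2}{\mu^2 N R_0}$ — this requires $N$ to be large enough that the third term in the $\min$ does not itself violate $\gamma\lesssim 1/L$, which is exactly the case handled by the additive $\sqrt{L/\mu}\log(1/\epsilon)$ term — and then to carefully absorb all the $\log_2 M$, $C_1$, and $\ln(\cdot)$ factors into the $\tilde{\mathcal O}(\cdot)$ notation. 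I would therefore split into two cases according to which term in the $\min$ is active, bound the error in each, and combine; the $\log_2 M = \log_2(1+2/\beta) = \tilde{\mathcal O}(1)$ identity (since $\beta = \tfrac23\sqrt{\mu\gamma}$) keeps $M$ from contributing anything worse than a logarithm.
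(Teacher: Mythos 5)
Your derivation is correct and is exactly the intended argument — the paper states this corollary without proof, and the natural route is the one you take: keep the two stepsize caps from Theorem~\ref{th:acc}, choose the third term of the minimum so that the exponential bias term is driven down to order $\tau\sigma^2/(\mu^2 N)$, observe that the variance term is then of the same order up to logarithmic factors, and convert the resulting bound into complexity via $1/\sqrt{\mu\gamma}\lesssim \sqrt{L/\mu}+\tau\delta^2/\mu^2$ together with the expected $\mathcal{O}(\log_2 M)=\tilde{\mathcal{O}}(1)$ oracle calls per iteration. The only bookkeeping point is the constant inside $\exp\bigl(-N\sqrt{\mu\gamma/9}\bigr)$ (one should take $\gamma\asymp\frac{9}{\mu N^2}\ln^2(\cdot)$ so the exponent matches the logarithm exactly, rather than up to a factor), which your $\lesssim$ and $\tilde{\mathcal{O}}$ conventions absorb.
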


\section{Numerical experiments}
In this section, we present numerical experiments that compare the proposed method and the existing approaches for the problem of finding consensus in distributed network.
\vspace{-0.4cm}
\subsection{Problem formulation}
\vspace{-0.1cm}
Let us consider the next problem. Assume that we have $\{x_i\}_{i=1}^d$, where $x_i \in \mathbb{R}$. Also we get a communication network, where $i^{th}$ agent stores $x_i$. Moreover, the communication graph can be described as $G_k = (V, E_k)$, where the set of edges depends on the $k$ -- the current moment. The task is formulated as a consensus search, i.e., to find $\overline{x} = \frac{1}{d}\sum_{i=1}^dx_i$ -- the average value of the agents.

\noindent To formalize our problem, we introduce the Laplacian matrix of the graph $G_k$: $W_k = D_k - A_k$ (here $D_k$ is the diagonal matrix with degrees of nodes, $A_k$ -- adjacency matrix) and its properties:
\begin{enumerate}
    \item $[W_k]_{i,j} \neq 0$  if and only if $(i,j) \in E_k$ or $i=j$,
	\item $\ker W_k \supset  \left\{ (x_1,\ldots,x_d) \in \R^d : x_1 = \ldots = x_d \right\}$,
	\item $\text{range } W_k \subset \left\{(x_1,\ldots,x_d) \in\R^d : \sum_{i=1}^d x_i = 0\right\}$.
\end{enumerate}
If we consider $x = (x_1, \ldots, x_d)^\top$, then, because of second property, one can obtain 
\begin{align*}
    x_1 = \ldots = x_n \Leftrightarrow W_kx = 0.
\end{align*}
Moreover, it is known that
\begin{align*}
    W_kx = 0 \Leftrightarrow \sqrt{W_k}x = 0.
\end{align*}
Hence, the problem of finding the consensus on the moment $k$ can be reformulated as
\begin{align}
    \label{eq: problem-consensus}
    \min_{x \in \mathbb{R^d}} \left[f(x): = \frac{1}{2}x^\top W_k x\right].
\end{align}
It is important that the problem formulations \eqref{eq: problem-consensus} for each $k$ have the same optimal point $x^*$, which is equal to consensus.

\noindent The classic approaches to find a consensus is a gossip protocol \cite{bertsekas2015parallel}. In terms of problem \eqref{eq: problem-consensus} the method can be formulated as a gradient descent:
\begin{align*}
    x^{k+1} = x^k - \gamma W_k x^k = (1 - \gamma W_k)x^k.
\end{align*}
This iteration sequence gives the consensus since the third property is fulfilled -- it allows to keep the sum of coordinates of $x^k$ the same, preventing the departure from the desired optimal point.

\noindent As mentioned above, the problem changes over time as the set of edges specifying the communication system changes. This situation occurs quite often in practice -- when additional resources are available to improve the network, edges may be added to speed up processes, and in some system failures, communications between agents may be disconnected due to crashes and overloads. Therefore, it is natural to assume that the changes in the graphs $G_k$ occur according to the Markovian law, since the changes are confined only to the current state of the communication system.

\noindent Since for the problem \eqref{eq: problem-consensus} the gradient is equal to $W_kx$, we have 
\begin{align*}
    \norm{W_kx - \mathbb{E}(W_k)x}^2 &= \norm{W_kx - W_kx^* - \mathbb{E}(W_k)x + \mathbb{E}(W_k)x^*}^2 \\ &\leq \lambda_{max}^2(W_k - \mathbb{E}(W_k))\norm{x - x^*}^2,
\end{align*}
where $\mathbb{E}(W_k)$ is an expectation of Laplacian matrix of a graph $G_k$ taking into account the stochasticity responsible for the changes in the graph (more detailed description see later). Consequently, the considered problem satisfies Assumption \ref{as:bounded_markov_noise_UGE}, what means that the theoretical analysis of our paper is applicable to \eqref{eq: problem-consensus}.
\subsection{Setup}
In numerical experiments, we consider the problem described above on different topologies with certain Markovian stochastisity.

\noindent\textbf{Brief description.} We design the experiments in the following way. Suppose we have some starting, or base topology. Then we modify it according to some Markovian law, during which we cannot affect the base graph (i.e., discard edges from it). Based on these changes, we compare two methods: proposed and classic one.

\noindent \textbf{Topologies.} As a base topologies we consider two types of graphs -- cycle-graph and star-graph. For each starting network we conducted numerical experiments for problems with different dimensions: $10, \ 100, \ 1000$.

\noindent \textbf{Markovian stochasticity.} The network changes in time in the certain way. On each moment $k$ with probability $\frac{1}{2}$ the random edge can be added to the topology, but if it already exists in the graph, then nothing happens. At the same time, with the same probability the random edge can be removed from the network. Nevertheless, if this edge is in the base topology or communication topology does not contain this edge, we keep the graph in the same condition.
\vspace{-0.4cm}
\subsection{Results}
We performed numerical experiments with different base topologies (see Figures \ref{fig: cycle} and \ref{fig: star}) with $d = 10$ (see Figures \ref{fig: cycle_10}, \ref{fig: star_10}), $100$ (see Figures \ref{fig: cycle_100}, \ref{fig: star_100}) and $1000$ (see Figures \ref{fig: cycle_1000}, \ref{fig: star_1000}). As a result, the proposed method outperform the classic approach \cite{bertsekas2015parallel} showing a faster rate of convergence, especially for the high-dimension problem.
\begin{figure}[h]
     \centering
     \begin{subfigure}[b]{0.43\textwidth}
         \centering
          \includegraphics[width=\textwidth]{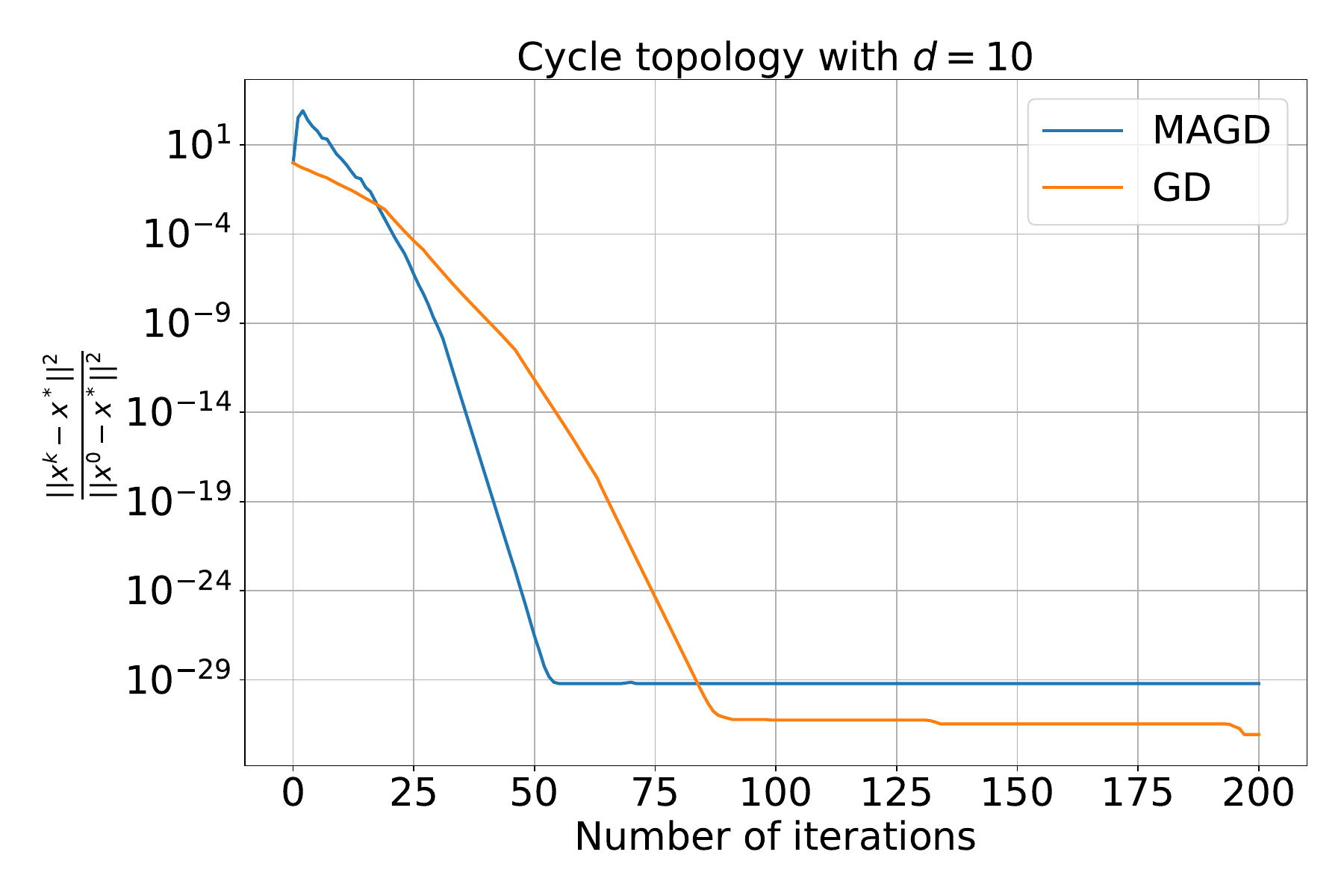}
         \caption{$d = 10$}
         \label{fig: cycle_10}
     \end{subfigure}
     \hfill
     \begin{subfigure}[b]{0.43\textwidth}
         \centering
         \includegraphics[width=\textwidth]{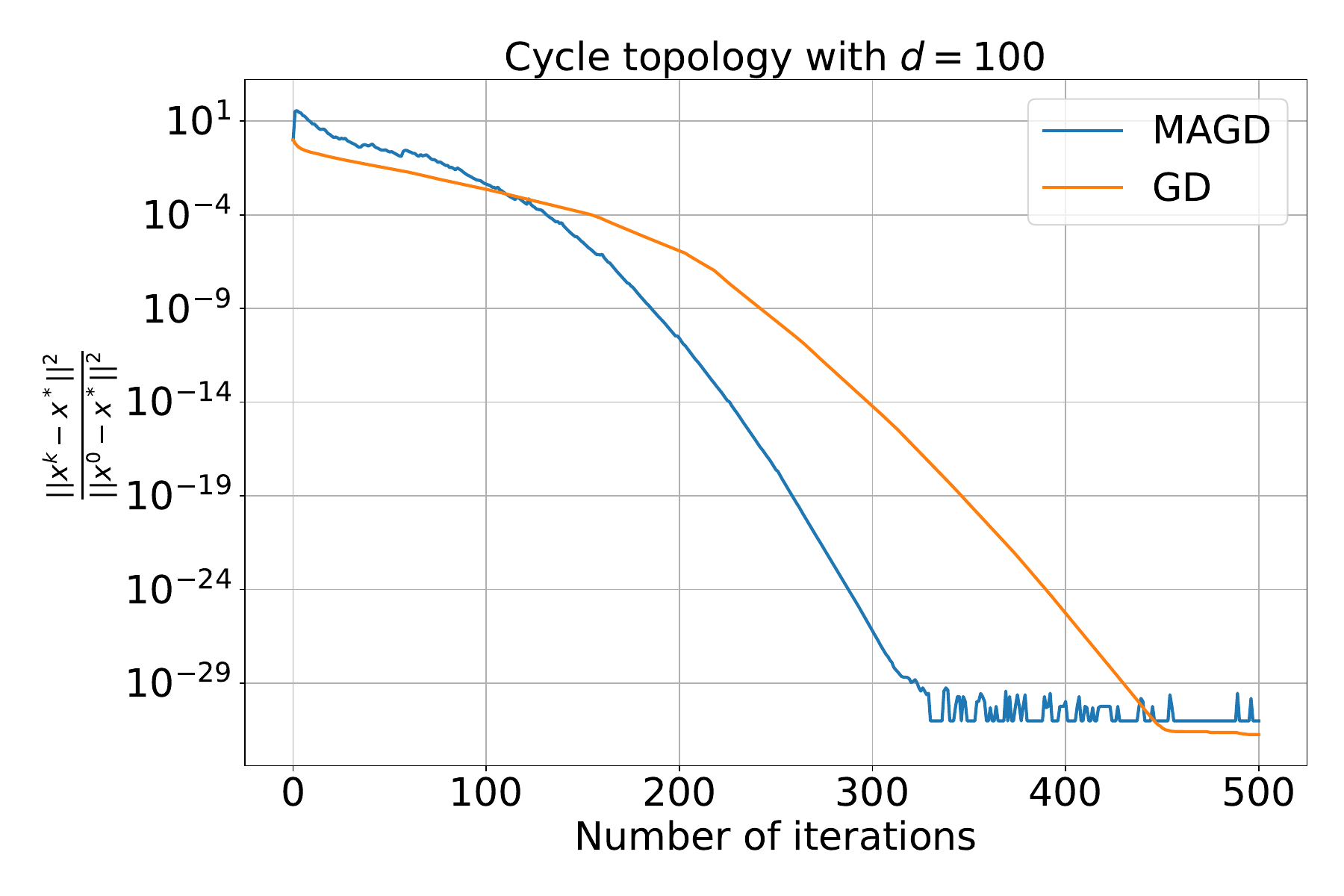}
         \caption{$d = 100$}
         \label{fig: cycle_100}
     \end{subfigure}
     \vfill
     \begin{subfigure}[b]{0.43\textwidth}
         \centering
         \includegraphics[width=\textwidth]{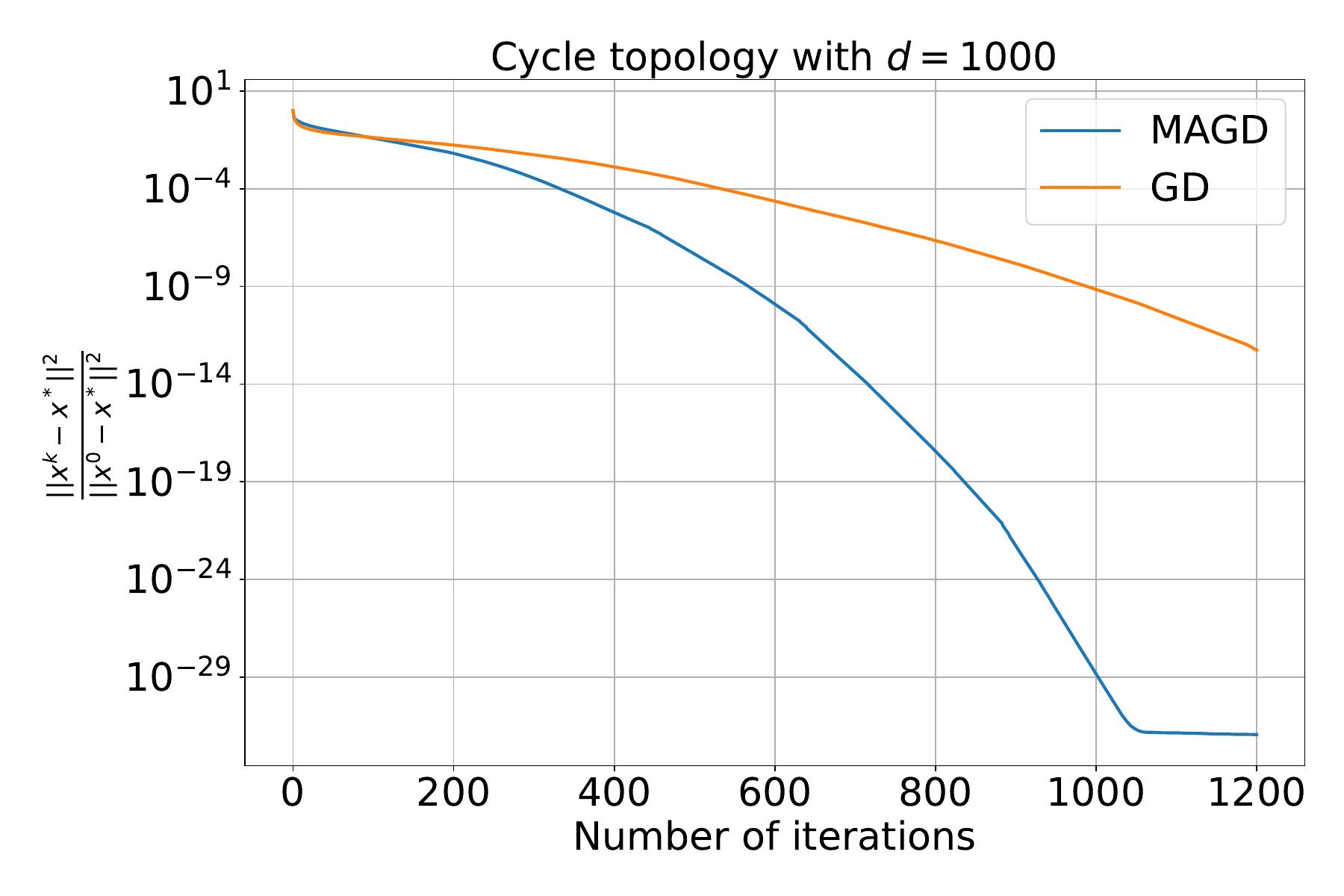}
         \caption{$d = 1000$}
         \label{fig: cycle_1000}
     \end{subfigure}
        \caption{Comparison of MAGD and GD for the consensus problem \eqref{eq: problem-consensus} on the cycle topology with different dimensions.}
        \label{fig: cycle}
\end{figure}
\begin{figure}[h]
     \centering
     \begin{subfigure}[b]{0.43\textwidth}
         \centering
         \includegraphics[width=\textwidth]{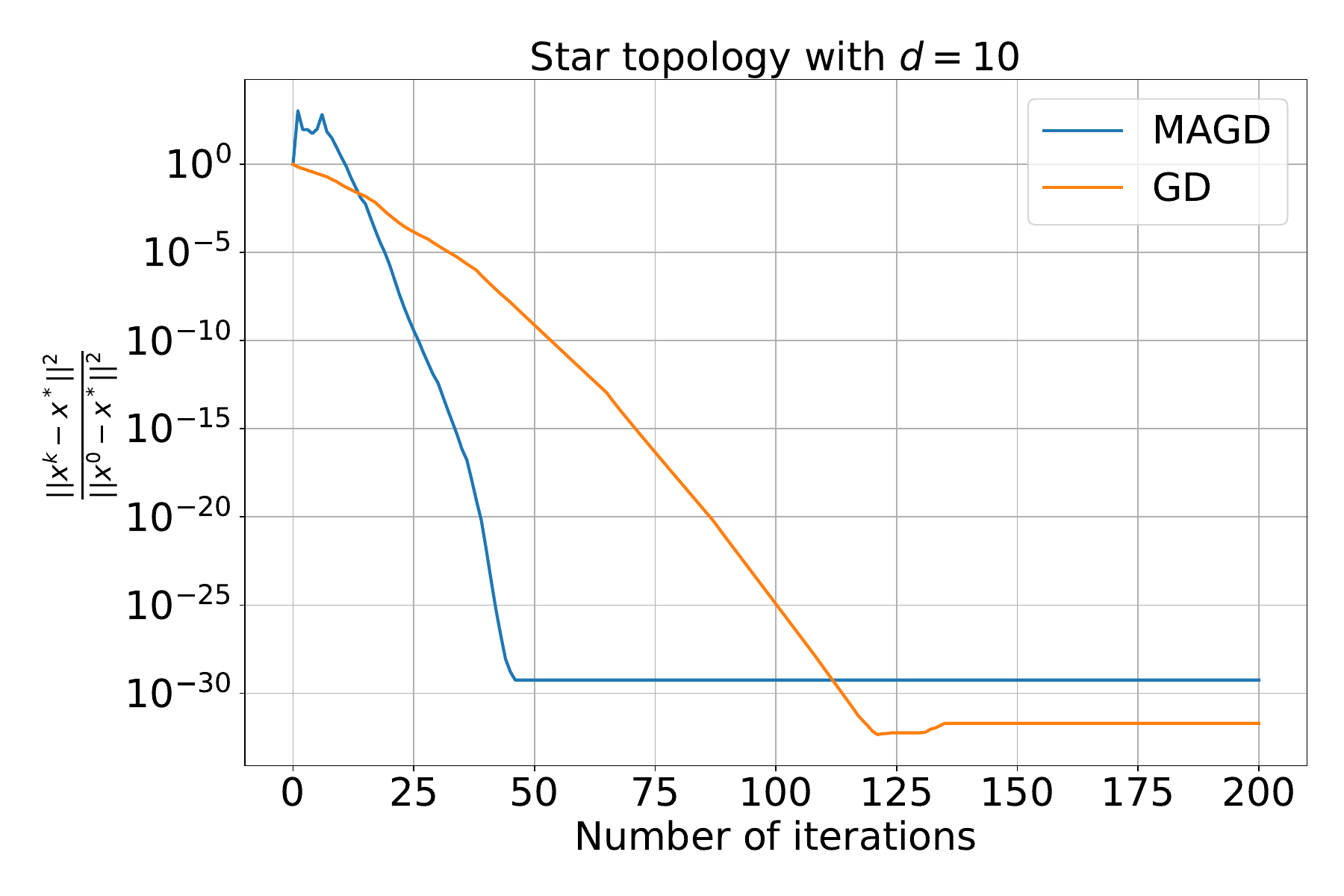}
         \caption{$d = 10$}
         \label{fig: star_10}
     \end{subfigure}
     \hfill
     \begin{subfigure}[b]{0.43\textwidth}
         \centering
         \includegraphics[width=\textwidth]{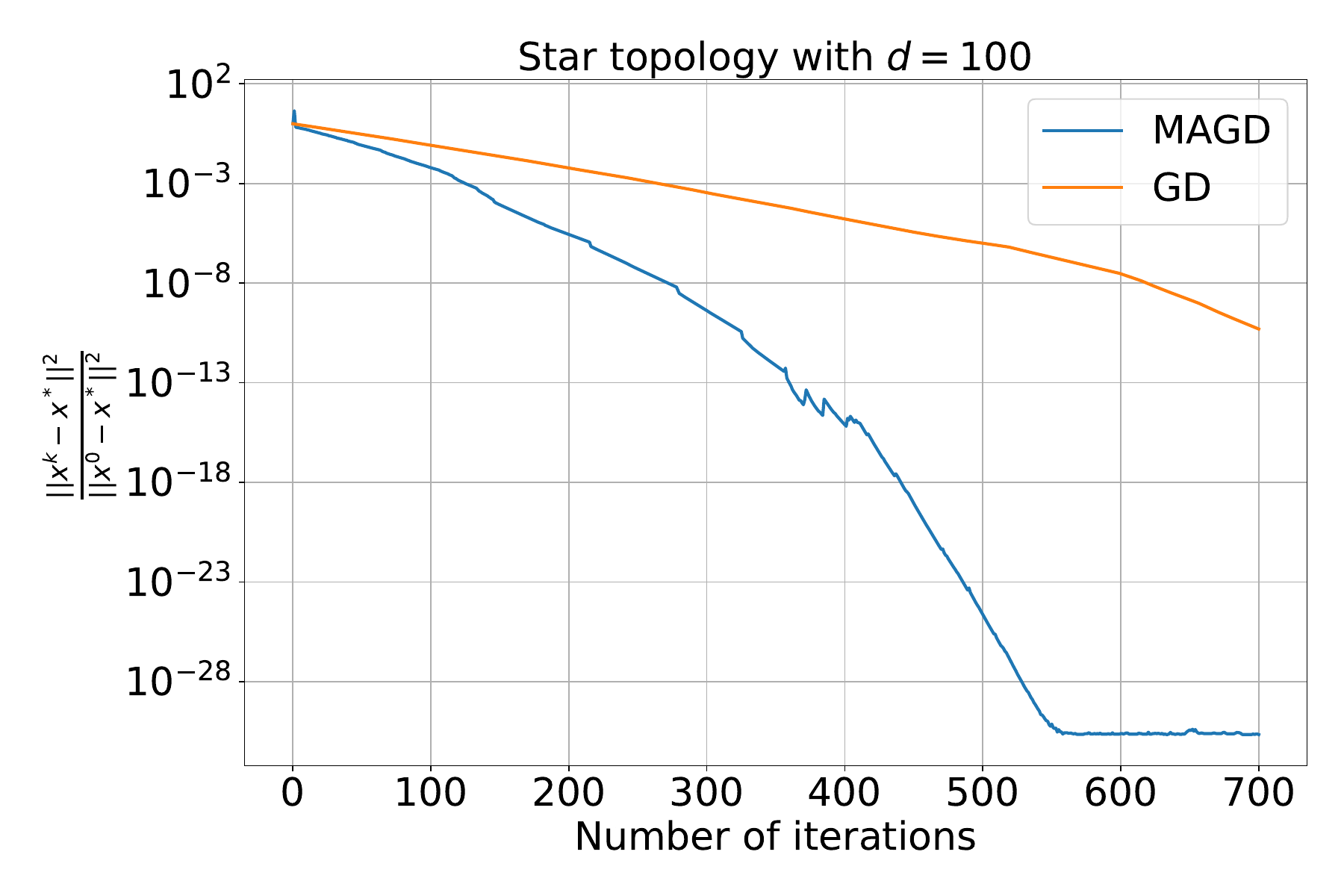}
         \caption{$d = 100$}
         \label{fig: star_100}
     \end{subfigure}
     \vfill
     \begin{subfigure}[b]{0.43\textwidth}
         \centering
         \includegraphics[width=\textwidth]{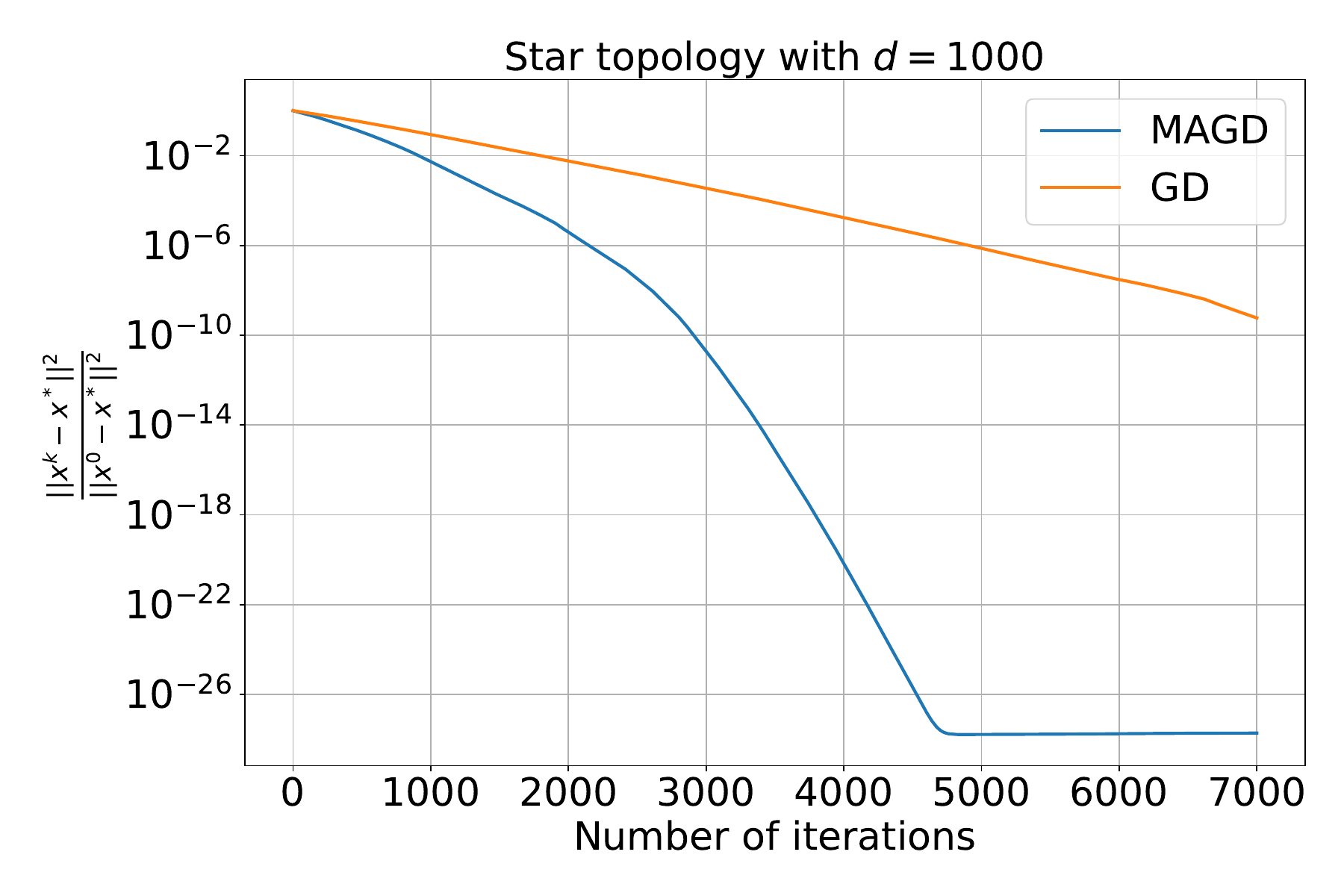}
         \caption{$d = 1000$}
         \label{fig: star_1000}
     \end{subfigure}
        \caption{Comparison of MAGD and GD for the consensus problem \eqref{eq: problem-consensus} on the star topology with different dimensions.}
        \label{fig: star}
\end{figure}

\newpage


\begin{thebibliography}{8}




\bibitem{beznosikov2023first}
A.~Beznosikov, S.~Samsonov, M.~Sheshukova, A.~Gasnikov, A.~Naumov, E.~Moulines. 
\newblock First order methods with markovian noise: from acceleration to variational inequalities. 
\newblock \emph{Advances in Neural Information Processing Systems}, 36, 2022.
\newblock \doi{10.48550/arXiv.2305.15938}

\bibitem{hu2009accelerated}
C.~Hu, W.~Pan, and J.~Kwok.
\newblock Accelerated gradient methods for stochastic optimization and online
  learning.
\newblock \emph{Advances in Neural Information Processing Systems}, 22, 2009.

\bibitem{cotter2011better}
A.~Cotter, O.~Shamir, N.~Srebro, and K.~Sridharan.
\newblock Better mini-batch algorithms via accelerated gradient methods.
\newblock \emph{Advances in neural information processing systems}, 24, 2011.

\bibitem{devolder2011stochastic}
O.~Devolder et~al.
\newblock Stochastic first order methods in smooth convex optimization.
\newblock Technical report, CORE, 2011.

\bibitem{lan2012optimal}
G.~Lan.
\newblock An optimal method for stochastic composite optimization.
\newblock \emph{Mathematical Programming}, 133\penalty0 (1-2):\penalty0
  365--397, 2012.

\bibitem{lin2014smoothing}
Q.~Lin, X.~Chen, and J.~Pena.
\newblock A smoothing stochastic gradient method for composite optimization.
\newblock \emph{Optimization Methods and Software}, 29\penalty0 (6):\penalty0
  1281--1301, 2014.

\bibitem{dvurechensky2016stochastic}
P.~Dvurechensky and A.~Gasnikov.
\newblock Stochastic intermediate gradient method for convex problems with
  stochastic inexact oracle.
\newblock \emph{Journal of Optimization Theory and Applications}, 171:\penalty0
  121--145, 2016.

\bibitem{gasnikov2018universal}
A.~V. Gasnikov and Y.~E. Nesterov.
\newblock Universal method for stochastic composite optimization problems.
\newblock \emph{Computational Mathematics and Mathematical Physics},
  58:\penalty0 48--64, 2018.

\bibitem{vaswani2019fast}
S.~Vaswani, F.~Bach, and M.~Schmidt.
\newblock Fast and faster convergence of {SGD} for over-parameterized models
  and an accelerated perceptron.
\newblock In \emph{The 22nd international conference on artificial intelligence and statistics}, pages 1195--1204. PMLR, 2019a.
  
\bibitem{taylor2019stochastic}
A.~Taylor and F.~Bach.
\newblock Stochastic first-order methods: non-asymptotic and computer-aided
  analyses via potential functions.
\newblock In \emph{Conference on Learning Theory}, pages 2934--2992. PMLR,
  2019.

\bibitem{aybat2019universally}
N.~S. Aybat, A.~Fallah, M.~Gurbuzbalaban, and A.~Ozdaglar.
\newblock A universally optimal multistage accelerated stochastic gradient
  method.
\newblock \emph{Advances in neural information processing systems}, 32, 2019.

\bibitem{gorbunov2021near}
E.~Gorbunov, M.~Danilova, I.~Shibaev, P.~Dvurechensky, and A.~Gasnikov.
\newblock Near-optimal high probability complexity bounds for non-smooth
  stochastic optimization with heavy-tailed noise.
\newblock \emph{arXiv preprint arXiv:2106.05958}, 2021.

\bibitem{woodworth2021even}
B.~E. Woodworth and N.~Srebro.
\newblock An even more optimal stochastic optimization algorithm: minibatching
  and interpolation learning.
\newblock \emph{Advances in Neural Information Processing Systems},
  34:\penalty0 7333--7345, 2021.

\bibitem{duchi2012ergodic}
J.~C. Duchi, A.~Agarwal, M.~Johansson, and M.~I. Jordan.
\newblock Ergodic mirror descent.
\newblock \emph{SIAM Journal on Optimization}, 22\penalty0 (4):\penalty0
  1549--1578, 2012.

\bibitem{kingma2014adam}
D.~P. Kingma and J.~Ba.
\newblock Adam: A method for stochastic optimization.
\newblock \emph{arXiv preprint arXiv:1412.6980}, 2014.

\bibitem{pmlr-v28-sutskever13}
I.~Sutskever, J.~Martens, G.~Dahl, and G.~Hinton.
\newblock On the importance of initialization and momentum in deep learning.
\newblock In S.~Dasgupta and D.~McAllester, editors, \emph{Proceedings of the
  30th International Conference on Machine Learning}, volume~28 of
  \emph{Proceedings of Machine Learning Research}, pages 1139--1147, Atlanta,
  Georgia, USA, 17--19 Jun 2013. PMLR.
\newblock URL \url{https://proceedings.mlr.press/v28/sutskever13.html}.

\bibitem{lan20}
G.~Lan.
\newblock \emph{First-order and Stochastic Optimization Methods for Machine
  Learning}.
\newblock 01 2020.
\newblock ISBN 978-3-030-39567-4.
\newblock \doi{10.1007/978-3-030-39568-1}.

\bibitem{dorfman2022adapting}
R.~Dorfman and K.~Y. Levy.
\newblock Adapting to mixing time in stochastic optimization with markovian
  data.
\newblock In \emph{International Conference on Machine Learning}, pages
  5429--5446. PMLR, 2022.
  
\bibitem{sun2018markov}
T.~Sun, Y.~Sun, and W.~Yin.
\newblock On {M}arkov chain gradient descent.
\newblock \emph{Advances in neural information processing systems}, 31, 2018.

\bibitem{beznosikov2020distributed}
A.~Beznosikov, V.~Samokhin, and A.~Gasnikov.
\newblock Distributed saddle-point problems: Lower bounds, optimal and robust algorithms. 
\newblock \emph{arXiv preprint arXiv:2010.13112}, 2020.

\bibitem{doan23}
T.~T. Doan.
\newblock Finite-time analysis of markov gradient descent.
\newblock \emph{IEEE Transactions on Automatic Control}, 68\penalty0
  (4):\penalty0 2140--2153, 2023.
\newblock \doi{10.1109/TAC.2022.3172593}.

\bibitem{doan2020convergence}
T.~T. Doan, L.~M. Nguyen, N.~H. Pham, and J.~Romberg.
\newblock Convergence rates of accelerated markov gradient descent with
  applications in reinforcement learning.
\newblock \emph{arXiv preprint arXiv:2002.02873}, 2020.

\bibitem{liu2018accelerating}
C.~Liu and M.~Belkin.
\newblock Accelerating sgd with momentum for over-parameterized learning.
\newblock \emph{arXiv preprint arXiv:1810.13395}, 2018.

\bibitem{pmlr-v162-sun22b}
T.~Sun, D.~Li, and B.~Wang.
\newblock Adaptive {R}andom {W}alk {G}radient {D}escent for {D}ecentralized
  {O}ptimization.
\newblock In K.~Chaudhuri, S.~Jegelka, L.~Song, C.~Szepesvari, G.~Niu, and
  S.~Sabato, editors, \emph{Proceedings of the 39th International Conference on
  Machine Learning}, volume 162 of \emph{Proceedings of Machine Learning
  Research}, pages 20790--20809. PMLR, 17--23 Jul 2022.
\newblock URL \url{https://proceedings.mlr.press/v162/sun22b.html}.

\bibitem{even2023stochastic}
M.~Even.
\newblock Stochastic gradient descent under {M}arkovian sampling schemes.
\newblock \emph{arXiv preprint arXiv:2302.14428}, 2023.

\bibitem{wang2022stability}
P.~Wang, Y.~Lei, Y.~Ying, and D.-X. Zhou.
\newblock Stability and generalization for markov chain stochastic gradient
  methods.
\newblock \emph{arXiv preprint arXiv:2209.08005}, 2022.

\bibitem{nagaraj2020least}
D.~Nagaraj, X.~Wu, G.~Bresler, P.~Jain, and P.~Netrapalli.
\newblock Least squares regression with markovian data: Fundamental limits and
  algorithms.
\newblock \emph{Advances in neural information processing systems},
  33:\penalty0 16666--16676, 2020.

\bibitem{hsieh2020explore}
Y.-G. Hsieh, F.~Iutzeler, J.~Malick, and P.~Mertikopoulos.
\newblock Explore aggressively, update conservatively: Stochastic extragradient
  methods with variable stepsize scaling.
\newblock \emph{Advances in Neural Information Processing Systems},
  33:\penalty0 16223--16234, 2020.

\bibitem{gorbunov2022stochastic}
E.~Gorbunov, H.~Berard, G.~Gidel, and N.~Loizou.
\newblock Stochastic extragradient: General analysis and improved rates.
\newblock In \emph{International Conference on Artificial Intelligence and
  Statistics}, pages 7865--7901. PMLR, 2022.

\bibitem{doi:10.1137/15M1031953}
A.~N. Iusem, A.~Jofr\'{e}, R.~I. Oliveira, and P.~Thompson.
\newblock Extragradient method with variance reduction for stochastic
  variational inequalities.
\newblock \emph{SIAM Journal on Optimization}, 27\penalty0 (2):\penalty0
  686--724, 2017.
\newblock \doi{10.1137/15M1031953}.
\newblock URL \url{https://doi.org/10.1137/15M1031953}.

\bibitem{douc:moulines:priouret:soulier:2018}
R.~Douc, E.~Moulines, P.~Priouret, and P.~Soulier.
\newblock \emph{Markov chains}.
\newblock Springer Series in Operations Research and Financial Engineering.
  Springer, 2018.
\newblock ISBN 978-3-319-97703-4.

\bibitem{gorbunov2020iid}
E.~Gorbunov, M.~Danilova, A.~Gasnikov
\newblock Stochastic Optimization with Heavy-Tailed Noise via Accelerated Gradient Clipping
\newblock \emph{Advances in Neural Information Processing Systems}, 34, 2020.
\newblock \doi{10.48550/arXiv.2005.10785}

\bibitem{robbinsmonro}
Herbert Robbins. Sutton Monro. 
\newblock A Stochastic Approximation Method.
\newblock Ann. Math. Statist. 22 (3) 400 - 407, September, 1951. \newblock URL https://doi.org/10.1214/aoms/1177729586\

\bibitem{nemirovskiyudin78}
Arkadi S Nemirovski and David Berkovich Yudin. 
\newblock Cesari convergence of the gradient method of
approximating saddle points of convex-concave functions. 
\newblock Doklady Akademii Nauk, volume
239, pages 1056–1059. Russian Academy of Sciences, 1978.

\bibitem{nemirovskiyudin83}
Arkadi Semenovich Nemirovsky and David Borisovich Yudin. 
\newblock Problem complexity and method
efficiency in optimization. 1983.

\bibitem{bertsekas2015parallel}
Bertsekas, Dimitri and Tsitsiklis, John.
\newblock Parallel and distributed computation: numerical methods. 2015.

\bibitem{nesterov_accelerated}
Y.~E. Nesterov.
\newblock A method for solving the convex programming problem with convergence rate {$O(1/k^{2})$}.
\newblock \emph{Dokl. Akad. Nauk SSSR}, 269\penalty0 (3):\penalty0 543--547, 1983.
\newblock ISSN 0002-3264.

\bibitem{doi:10.1137/100802001}
Y.~Nesterov.
\newblock Efficiency of coordinate descent methods on huge-scale optimization problems.
\newblock \emph{SIAM Journal on Optimization}, 22\penalty0 (2):\penalty0
  341--362, 2012.
\newblock \doi{10.1137/100802001}.

\bibitem{lan2008}
Lan, Guanghui. 
\newblock Efficient Methods for Stochastic Composite Optimization
\newblock \url{https://api.semanticscholar.org/CorpusID:15780105}

\bibitem{schmidt2013fast}
M.~Schmidt and N.~Le Roux.
\newblock Fast Convergence of Stochastic Gradient Descent under a Strong Growth Condition,
\newblock 2013, \url{https://arxiv.org/pdf/1308.6370.pdf}

\bibitem{wang2021convergence}
H.~Wang, M.~Gürbüzbalaban, L.~Zhu, U.~Şimşekli and M. A.~Erdogdu.
\newblock Convergence Rates of Stochastic Gradient Descent under Infinite Noise Variance,
\newblock \emph{Advances in Neural Information Processing Systems}, 34\penalty0, 2021.
\newblock \url{https://doi.org/10.48550/arXiv.2102.10346}

\bibitem{Qu_2020}
Qu, Guannan and Li, Na
\newblock Accelerated Distributed Nesterov Gradient Descent,
\newblock IEEE Transactions on Automatic Control, 2020: 2566–2581. 
\newblock \url{https://doi.org/10.1109/TAC.2019.2937496}

\bibitem{devolder}
Devolder, Olivier. 
\newblock Stochastic first order methods in smooth convex optimization. 
\newblock CORE Discussion Paper ; 2011/70 (2011)

\end{thebibliography}
\end{document}